\documentclass[11pt,reqno,oneside]{amsart}
\usepackage{graphics}
\usepackage[dviwin]{color}

\vsize 27.9truecm \hsize 21.59truecm
\setlength{\oddsidemargin}{0.5cm}
\setlength{\evensidemargin}{0.0cm} \setlength{\topmargin}{-0.0cm}
\setlength{\headheight}{0.0cm} \setlength{\textheight}{23.5cm}
\setlength{\textwidth}{15.5cm}

\newtheorem{theorem}{Theorem}[section]
\newtheorem{lemma}{Lemma}[section]
\newtheorem{corollary}{Corollary}[section]
\newtheorem{remark}{Remark}[section]

\def\U{{\mathrm{U}}}
\def\o{\Omega}
\def\O{\Omega}
\def\w{\omega}
\def\ho{\widehat{\Omega}}
\def\oj{\o^{n^\prime,s}}
\def\hoj{\ho^{n^\prime,s}}
\def\g{\gamma}
\def\a{\alpha}
\def\b{\beta}
\def\zp{z^\prime}
\def\hzp{\hz^\prime}
\def\hx{\hat{x}}
\def\hy{\hat{y}}
\def\hz{\hat{z}}

\def\hg{\hat{g}}
\def\hb{\hat{\beta}}

\def\uu{{\bf u}}

\def\vv{{\bf v}}
\def\ff{{\bf g}}
\def\ww{{\bf w}}

\def\G{{\bf G}}

\def\R{{\mathbb R}}

\def\NN{{\mathbb N}}

\def\W1p{W^{1,p}(\o)}

\def\di{\mbox{div\,}}

\def\ve{\varepsilon}


\begin{document}

\section*{}

\setcounter{equation}{0}
\title[]{Solutions of the divergence and Korn inequalities on domains with an
external cusp}

\author[R. G. Dur\'an]{Ricardo G. Dur\'an}
\address{Departamento de Matem\'atica\\ Facultad de
Ciencias Exactas y Naturales\\Universidad de Buenos Aires\\1428 Buenos Aires\\
Argentina.} \email{rduran@dm.uba.ar}

\author[F. L\'opez Garc\'\i a]{Fernando L\'opez Garc\'\i a}
\address{Departamento de Matem\'atica\\ Facultad de
Ciencias Exactas y Naturales\\Universidad de Buenos Aires\\1428 Buenos Aires\\
Argentina.} \email{flopezg@dm.uba.ar}

\thanks{Supported by ANPCyT under grant PICT 2006-01307, by Universidad de Buenos Aires under
grant X070 and by CONICET under grant PIP 5478. The first
author is a member of CONICET, Argentina.}

\keywords{Divergence Operator, weighted Sobolev spaces, Korn
inequality }

\subjclass{Primary: 26D10 , 35Q30 ; Secondary 76D03}

\begin{abstract} This paper deals with solutions of the divergence
for domains with external cusps. It is known that the classic
results in standard Sobolev spaces, which are basic in the
variational analysis of the Stokes equations, are not valid for
this class of domains.

For some bounded domains $\Omega\subset{\mathbb R}^n$ presenting  power type
cusps of integer dimension $m\le n-2$, we prove the existence of
solutions of the equation $\mbox{div\,}{\bf u}=f$ in weighted Sobolev spaces,
where the weights are powers of the distance to the cusp. The
results obtained are optimal in the sense that the powers cannot
be improved.

As an application, we prove existence and uniqueness of solutions
of the Stokes equations in appropriate spaces for cuspidal
domains. Also, we obtain weighted Korn type inequalities for this
class of domains.
\end{abstract}
\maketitle

\section{Introduction}

This paper deals with solutions of the divergence in domains with
external cusps. Given a bounded domain $\O\subset\R^n$, it is
known that, under appropriate assumptions on $\O$, there exists a
continuous right inverse of the operator ${\rm
div}:W^{1,p}_0(\O)^n\to L_0^p(\O)$, $1<p<\infty$, where
$L_0^p(\O)$ denotes the space of functions in $L^p(\O)$ with
vanishing mean value in $\O$. In other words, given any $f\in
L_0^p(\O)$, there exists a solution $\uu\in W^{1,p}_0(\O)^n$ of
\begin{eqnarray}
\label{div} \di \uu=f
\end{eqnarray}
satisfying
\begin{eqnarray}
\label{norma} \|\uu\|_{W^{1,p}(\O)} \le C\|f\|_{L^p(\O)},
\end{eqnarray}
where the constant $C$ depends only on $\O$ and $p$.

This result has many applications, for example, in the particular
case $p=2$, it is a basic tool for the variational analysis of the
Stokes equations and it implies the Korn inequality in its more
general form (see for example \cite{BS,C}).

Consequently, this problem has been widely studied and several
methods to prove the existence of $\uu$ satisfying (\ref{div}) and
(\ref{norma}), under different assumptions on the domain, have
been developed (see for example \cite{ADM,ASV,B,BS,DRS,GR,BA,L}).

On the other hand, it is known that this result does not hold for
domains with external cusps. Several arguments have been given to
show this fact \cite{ADLg,D,GG}, but the oldest counterexample
goes back to Friedrichs, who showed that an inequality for
analytic complex functions (which follows easily from the
existence of $\uu$ satisfying (\ref{div}) and (\ref{norma})) does
not hold in a domain with a quadratic external cusp (see
\cite{F}).

Therefore, it is an interesting question what kind of weaker
results can be proved for these domains and whether these results
can be applied to show the well posedness of the Stokes equations
in appropriate spaces. Since the problem arises because of the bad
behavior of the boundary, it seems natural to work with weighted
Sobolev spaces where the weights are related to the distance to
the boundary or to its singularities.

Recently, in \cite{DLg}, we have obtained results for planar
simply connected H\"older-$\alpha$ domains working with weights
which are powers of the distance to the boundary of $\O$. The
domains with external cusps that we are going to consider in this
paper are a subclass of the H\"older-$\alpha$ domains. However,
for this particular subclass, it is natural to look for stronger
results where the distance to the boundary is replaced by the
distance to the cusp, which can be a point or more generally a set
of dimension $m\le n-2$. To obtain this kind of results is the
main goal of this paper.

As mentioned above, an important consequence of the existence of
continuous right inverses of the divergence is the Korn
inequality. We are going to show that the known arguments can be
extended to some weighted cases allowing us to obtain new weighted
Korn inequalities for domains with external cusps.

Our results are optimal in the sense that the powers of the
distance to the cusp involved in the estimates cannot be improved,
this is proved in \cite{ADLg}.

As an application we prove the well posedness in appropriate
spaces of the Stokes equations in domains with external cusps. In
the particular two dimensional case similar results were proved in
our previous paper \cite{DLg} but a restriction in the power of
the cusp was needed (this restriction is removed here).

The rest of the paper is organized as follows. Since the analysis
of the Stokes equations is our main motivation, we start
developing a generalized variational analysis of these equations,
this is done in Section \ref{section2}. Also in this section, we
show by a simple example, that the existence of solution of the
Stokes equations in the standard spaces is not true for cuspidal
domains. In Section \ref{section3} we prove some auxiliary results
concerning solutions of the divergence in weighted Sobolev spaces
for domains which are star-shaped with respect to a ball. Section
\ref{sol de la div} contains our main results, namely, the
existence of solutions of the divergence in appropriate spaces for
cuspidal domains. Finally, Sections \ref{aplicacion1} and
\ref{aplicacion2} deal with the applications to the Stokes
equations and to the Korn inequalities respectively.

We will work with weighted $L^p$-norms. Given a non-negative
function $\w$ and a domain $\O\subset\R^n$ we denote with
$L^p(\O,\w)$ the Banach space with norm given by
$$
\|f\|^p_{L^p(\O,\w)}=\int_\O |f(x)|^p \, \w(x)\, dx.
$$
If $\w$ is such that $L^p(\O,\w)\subset L^1(\O)$, $L_0^p(\O,\w)$
denotes the subspace of $L^p(\O,\w)$ of functions with vanishing
mean value in $\O$.

\section{Generalized variational analysis of the Stokes equations}
\label{section2} \setcounter{equation}{0}

The goal of this section is to explain the motivation of the main
results of this paper, namely, the existence of right inverses of
the divergence in weighted Sobolev spaces.

First of all, we show by a simple example that the Stokes system
of equations is not well posed in the usual Sobolev spaces for
domains with external cusps. In view of this fact we introduce a
generalization of the classic analysis for this kind of domains.
We will use the usual notations for Sobolev spaces.

The Stokes equations are given by

\begin{align}
\label{stokes1}
\begin{cases}
-\Delta\uu\,+\,\nabla p&=\,\ff\hspace*{1cm}{\rm in}\ \o\\
\di\uu&=\,0\hspace*{1cm}{\rm in}\ \o\\
\uu&=\,0\hspace*{1cm}{\rm on}\ \partial\o.
\end{cases}
\end{align}

For a bounded domain $\O$ which is  Lipschitz (or more generally a
John domain \cite{ADM}) it is known that, if $\ff\in
H^{-1}(\O)^n$, then there exists a unique solution
$$
(\uu,p)\in H^1_0(\O)^n\times L^2_0(\O).
$$
Moreover, the following a priori estimate holds
\[
\|\uu\|_{H^1(\O)^n} + \|p\|_{L^2(\O)} \le C
\|\ff\|_{H^{-1}(\O)^n},
\]
where the constant $C$ depends only on the domain $\O$.

Let us show that this result is not valid in general for domains
with external cusps. Consider for example the domain
\begin{equation}
\label{cuspide1} \O=\Big\{x=(x_1,x_2)\in \R^2:\, 0<x_1<1 \, , \,
|x_2|<x_1^2\Big\}.
\end{equation}
It is known that there exists a function $p\in L^1_0(\O)$ such
that $\nabla p \in H^{-1}(\O)^2$ but $p\notin L^2(\O)$. A simple
example given by G. Acosta is $p(x_1,x_2)=\frac{1}{x_1^2}-6$.
Indeed, by elementary integration one can easily check that
$p\notin L^2(\O)$. On the other hand, to see that $\nabla p \in
H^{-1}(\O)^2$, we only have to show that $\frac{\partial
p}{\partial x_1}\in H^{-1}(\O)$, but this follows from
$\frac{\partial p}{\partial x_1} =\frac{\partial}{\partial
x_2}\left(-\frac{2x_2}{x_1^3}\right)$ and $-\frac{2x_2}{x_1^3}\in
L^2(\O)$.

Consider now the Stokes problem (\ref{stokes1}) with \[
\ff(x)=\left(-\frac{2}{x_1^3},0\right)=\nabla p\in H^{-1}(\O)^2.
\]
Then,
\[
(\uu,p)=\left({\bf 0},\frac{1}{x_1^2}-6\right)
\]
is a solution, but $p\notin L^2(\O)$.

One could ask whether another solution in the space
$H_0^1(\O)^2\times L_0^2(\O)$ exists. That this is not the case
will follow from our general results which, for this particular
domain, give existence and uniqueness (up to an additive constant
in the pressure) in the space
\[
H_0^1(\O)^2\times L^2(\O,|x|^2)\supset H_0^1(\O)^2\times L^2(\O)
\]
and it is easy to see that our solution $({\bf 0},p)$ belongs to
this larger space.

Our general existence and uniqueness results for domains with
cusps follow from the classic theory but replacing the usual
Sobolev spaces by appropriate weighted spaces.

The classic analysis of the Stokes equations is based on the
abstract theory for saddle point problems given by Brezzi in
\cite{Br} (see also the books \cite{BF,GR,BDF}).

Indeed, the weak formulation of (\ref{stokes1}) can be written as

\begin{align}
\label{stokes debil}
\begin{cases}
a(\uu,\vv) + b(\vv,p)&= \int_\O\ff\cdot\vv \qquad \forall \vv\in V\\
b(\uu,q)&=0\qquad \qquad\ \ \forall q\in Q,
\end{cases}
\end{align}
where
\[
a(\uu,\vv)=\int_\O D\uu : D\vv
\]
and
\[
b(\vv,p)= \int_\O p\,\di\vv,
\]
where, for $\vv\in H^1(\O)^n$, $D\vv$ is its differential matrix
and, given two matrices $A=(a_{ij})$ and $B=(b_{ij})$ in
$\R^{n\times n}$, $A:B=\sum_{i,j=1}^{n}=a_{ij}b_{ij}$.

The abstract theory gives existence and uniqueness for
(\ref{stokes debil}) when $a$ and $b$ are continuous bilinear
forms, $a$ is coercive on the kernel of the operator $B\,:\,V\to
Q'$ associated with $b$, and $b$ satisfies the inf-sup condition
\[
\inf_{0\neq q\in Q}\sup_{0\neq\vv\in V} \frac{b(\vv,q)}
{\|q\|_Q\|\vv\|_V}>0.
\]
In the case of the Stokes problem, if we choose the spaces
$V=H^1_0(\O)^n$ and $Q=L_0^2(\O)$, continuity of the bilinear
forms and coercivity of $a$ follow immediately by Schwarz and
Poincar\'e inequalities. Therefore, the problem reduces to prove
the inf-sup condition for $b$ which reads

\begin{equation}
\label{inf-sup} \inf_{0\neq q\in L^2_0(\O)}\sup_{0\neq\vv\in
H^1_0(\O)^n} \frac{\int_\O
q\,\di\vv}{\|q\|_{L_0^2(\O)}\|\vv\|_{H^1_0(\O)^n}}>0.
\end{equation}
It is well known that this condition is equivalent to the
existence of solutions of $\di\uu=f$, for any $f\in L_0^2(\O)$,
with $\uu\in H^1_0(\O)^n$ satisfying $\|\uu\|_{H^1(\O)^n}\le
C\|f\|_{L^2(\O)}$.

Observe that in the above example of cuspidal domain, the
condition (\ref{inf-sup}) does not hold, because it would imply
existence of solution $(\uu,p)\in H^1_0(\O)^n\times L_0^2(\O)$ for
any $\ff\in H^{-1}(\O)^n$ and we have shown that this is not
possible.

For domains such that (\ref{inf-sup}) is not valid, the idea is to
replace this condition by a weaker one. With this goal we will
work with weighted norms.

We will use the following facts for $\w\in L^1(\O)$ which are easy
to see. First, $L^2(\O,\w^{-1})\subset L^1(\O)$ and therefore
$L_0^2(\O,\w^{-1})$ is well defined, and second, the integral
$\int_\O q\,\w$ is well defined for $q\in L^2(\O,\w)$ and
therefore we can define the space
\[
L_{\w,0}^2(\O,\w) =\left\{q\in L^2(\O,\w)\,:\, \int_\O
q\,\w=0\right\}.
\]

We have the following generalization of the classic result which
will be useful for cuspidal domains.

\begin{theorem}
\label{existencia y unicidad} Let $\w\in L^1(\O)$ be a positive
weight. Assume that for any $f\in L^2_0(\O,\w^{-1})$  there exists
$\uu\in H_0^1(\O)^n$ such that $\di\uu=f$ and
\[
\|\uu\|_{H^1(\O)^n}\le C_1 \|f\|_{L^2(\O,\w^{-1})},
\]
with a constant $C_1$ depending only on $\O$ and $\w$. Then, for
any $\ff\in H^{-1}(\O)^n$, there exists a unique $(\uu,p)\in
H^1_0(\O)^n\times L_{\w,0}^2(\O,\w)$ solution of the Stokes
problem (\ref{stokes1}). Moreover,
\[
\|\uu\|_{H^1(\O)^n} + \|p\|_{L^2(\O,\w)} \le C_2
\|\ff\|_{H^{-1}(\O)^n},
\]
where $C_2$ depends only on $C_1$ and $\O$.
\end{theorem}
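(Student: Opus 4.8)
The plan is to verify the three hypotheses of Brezzi's abstract saddle-point theorem for the weighted weak formulation, so that existence, uniqueness, and the a priori estimate follow immediately. I would take as spaces $V=H^1_0(\O)^n$ with its usual norm and $Q=L^2(\O,\w)$ (with the mean-value-type constraint encoded in $L^2_{\w,0}(\O,\w)$), and reinterpret the pressure part of the formulation in this weighted space. The bilinear form $a(\uu,\vv)=\int_\O D\uu:D\vv$ is unchanged, so its continuity on $V\times V$ and its coercivity follow exactly as in the unweighted case from Schwarz and Poincar\'e, independently of $\w$. The only genuinely new ingredients are the continuity of $b(\vv,q)=\int_\O q\,\di\vv$ with respect to the weighted norm on $q$, and the weighted inf-sup condition.

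First I would check continuity of $b$. For $q\in L^2(\O,\w)$ and $\vv\in H^1_0(\O)^n$ I would write $\int_\O q\,\di\vv=\int_\O (q\,\w^{1/2})(\w^{-1/2}\di\vv)$ and apply Cauchy-Schwarz to bound this by $\|q\|_{L^2(\O,\w)}\|\di\vv\|_{L^2(\O,\w^{-1})}$. Since $\w\in L^1(\O)$, the weight $\w^{-1}$ is not controlled from above, so this last factor is not obviously bounded by $\|\vv\|_{H^1(\O)^n}$; this is the point where I expect to need care. The natural fix is to use the duality between $L^2(\O,\w)$ and $L^2(\O,\w^{-1})$ together with the structure of the problem, treating $b$ as a pairing of $Q=L^2(\O,\w)$ against divergences, and to accept that the relevant mapping properties are dictated precisely by the hypothesis on the solvability of the divergence in the $\w^{-1}$-weighted space.

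The heart of the argument, and the step I expect to be the main obstacle, is the inf-sup condition in the weighted setting: for every $0\neq q\in L^2_{\w,0}(\O,\w)$ I must produce $\vv\in H^1_0(\O)^n$ with $\int_\O q\,\di\vv$ bounded below by a fixed multiple of $\|q\|_{L^2(\O,\w)}\|\vv\|_{H^1(\O)^n}$. The idea is to dualize: given such $q$, consider the functional $f$ defined through $q$ and $\w$, observe that $f\in L^2_0(\O,\w^{-1})$ (here the vanishing-mean constraint in $L^2_{\w,0}(\O,\w)$ is exactly what makes $f$ have vanishing mean so that the divergence hypothesis applies), and invoke the assumed right inverse of the divergence to obtain $\uu\in H^1_0(\O)^n$ solving $\di\uu=f$ with $\|\uu\|_{H^1(\O)^n}\le C_1\|f\|_{L^2(\O,\w^{-1})}$. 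Choosing $\vv=\uu$ as the test field, I would compute $\int_\O q\,\di\uu=\int_\O q f$ and arrange, by the definition of $f$ in terms of $q$ and $\w$, that this equals $\|q\|_{L^2(\O,\w)}^2$ while $\|f\|_{L^2(\O,\w^{-1})}=\|q\|_{L^2(\O,\w)}$; combined with the $H^1$ bound on $\uu$, this yields the inf-sup constant as the reciprocal of $C_1$.

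Once the three Brezzi hypotheses are in place, I would simply invoke the abstract theorem to conclude existence and uniqueness of $(\uu,p)\in H^1_0(\O)^n\times L^2_{\w,0}(\O,\w)$, together with the stability estimate; the final constant $C_2$ is then expressed in terms of the coercivity and continuity constants of $a$ and the inf-sup constant $1/C_1$, hence depends only on $C_1$ and $\O$. The one subtlety to handle carefully throughout is the bookkeeping with the two dual weights $\w$ and $\w^{-1}$ and the correct identification of the vanishing-mean conditions, ensuring that the pressure is determined uniquely up to the prescribed constraint in $L^2_{\w,0}(\O,\w)$.
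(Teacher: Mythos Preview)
Your inf-sup argument is essentially the paper's: given $q\in L^2_{\w,0}(\O,\w)$, set $f=q\w$, observe that $f\in L^2_0(\O,\w^{-1})$ with $\|f\|_{L^2(\O,\w^{-1})}=\|q\|_{L^2(\O,\w)}$ (the zero-mean condition on $f$ is exactly the defining constraint of $L^2_{\w,0}$), apply the hypothesis to obtain $\uu$ with $\di\uu=f$, and compute $\int_\O q\,\di\uu=\|q\|_{L^2(\O,\w)}^2$.

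However, your choice $V=H^1_0(\O)^n$ with its usual norm leaves a genuine gap at the continuity of $b$, and you flag it without resolving it. The problem is not merely that the bound is ``not obvious'': since $\w$ is allowed to vanish (as it does in the cuspidal application, where $\w=d_M^{2(\g-1)}$), the space $L^2(\O,\w)$ can contain functions not in $L^2(\O)$, and for a generic $\vv\in H^1_0(\O)^n$ there is no reason $\di\vv\in L^2(\O,\w^{-1})$. Thus $b(\vv,q)=\int_\O q\,\di\vv$ need not even be defined on all of $V\times Q$, and the Brezzi framework cannot be invoked as stated.

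The paper's remedy is to enlarge the velocity norm rather than the space: take
\[
V=\Big\{\vv\in H^1_0(\O)^n:\ \di\vv\in L^2(\O,\w^{-1})\Big\},\qquad
\|\vv\|_V^2=\|\vv\|_{H^1(\O)^n}^2+\|\di\vv\|_{L^2(\O,\w^{-1})}^2.
\]
Continuity of $b$ on $V\times Q$ is then immediate from Cauchy--Schwarz. Coercivity of $a$ in this stronger norm fails on all of $V$, but Brezzi only requires it on $\ker B$, which consists of divergence-free fields, where the extra term in $\|\cdot\|_V$ vanishes and Poincar\'e gives coercivity as before. The inf-sup step is exactly yours, with the small addition that one must also bound $\|\di\uu\|_{L^2(\O,\w^{-1})}=\|q\w\|_{L^2(\O,\w^{-1})}=\|q\|_Q$ to control the full $V$-norm of the test field. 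The final estimate is stated for $\|\uu\|_{H^1(\O)^n}$, which is dominated by $\|\uu\|_V$, so nothing is lost.
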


\begin{proof} We apply the general abstract theory for saddle point problems
with appropriate spaces.

For the pressure we introduce the space $Q=L_{\w,0}^2(\O,\w)$ with
the norm $\|q\|_Q=\|q\|_{L^2(\O,\w)}$.

Since we are modifying the pressure space, we have to enlarge the
$H^1$-norm of the velocity space in order to preserve continuity
of the bilinear form $b$. Then, we define
\[
V=\Big\{\vv\in H^1_0(\O)^n\,:\, \di\vv \in L^2(\O,\w^{-1})\Big\}
\]
with the norm given by
\[
\|\vv\|^2_V = \|\vv\|^2_{H^1(\O)^n} +
\|\di\vv\|^2_{L^2(\O,\w^{-1})}.
\]

Since $\|\vv\|_{H^1(\O)^n}\le\|\vv\|_V$ the continuity of $a$ in
$V\times V$ follows immediately by Schwarz inequality. Also, from
the definitions of the spaces it is easy to see that $b$ is
continuous on $V\times Q$.

On the other hand, coercivity of $a$, in the norm of $V$, on the
kernel of the operator $B$ follows from Poincar\'e inequality
because this kernel consists of divergence free vector fields.

Therefore, to apply the general theory it only rests to prove the
inf-sup condition
\begin{equation}
\label{if} \inf_{0\neq q\in Q}\sup_{0\neq\vv\in V} \frac{\int_\O
q\,\di\vv} {\|q\|_Q\|\vv\|_V}>0.
\end{equation}
But this follows in a standard way. Indeed, given $q\in Q$ it
follows from our hypothesis that there exists $\uu\in H_0^1(\O)^n$
such that $\di\uu=q\,\w$ and
\[
\|\uu\|_{H^1(\O)^n}\le C_1 \|q\,\w\|_{L^2(\O,\w^{-1})}
=C_1\|q\|_Q.
\]
Moreover, since $\|\di\uu\|_{L^2(\O,\w^{-1})} =\|q\|_Q$ we have
\[
\|\uu\|_V\le C\|q\|_Q,
\]
with $C$ depending only on $C_1$.

Then,
\[
\|q\|_Q =\frac{\int_\O q\, q\,\w}{\|q\|_Q} \le C \frac{\int_\O q\,
\di\uu}{\|\uu\|_V}
\]
and therefore (\ref{if}) holds.
\end{proof}

As an example let us mention that the hypothesis of the theorem
holds for the case of the cuspidal domain introduced in
(\ref{cuspide1}) with $\w(x)=|x|^2$. This result is a particular
case of the general results that we are going to prove in Section
\ref{sol de la div}. Consequently, there is a unique weak solution
$(\uu,p)\in H_0^1(\O)^2\times L_0^2(\O,|x|^2)$ of the Stokes
equations (\ref{stokes1}) in this domain.

\section{Solutions of the divergence in star-shaped domains}
\label{section3} \setcounter{equation}{0}

We will work with weighted Sobolev spaces. Given weights
$\w_1,\w_2:\R^n\rightarrow [0,\infty]$, for any domain
$U\subset\R^n$ and $1<p<\infty$, we define

\[
W^{1,p}(U,\w_1,\w_2) =\left\{f\in L^p(U,\w_1)\,:\, \frac{\partial
f}{\partial x_i}\in L^p(U,\w_2),\, \ 1\le i\le n\right\}
\]
with the norm given by
\[
\|f\|^p_{W^{1,p}(U,\w_1,\w_2)}=\int_U|f(x)|^p\w_1(x)\,dx
+\sum_{i=1}^n\int_U \left|\frac{\partial f(x)}{\partial
x_i}\right|^p\w_2(x)\,dx.
\]
To simplify notation we will write $W^{1,p}(U,\w)$ instead of
$W^{1,p}(U,\w,\w)$.

To prove our main results concerning solutions of the divergence
in cuspidal domains, we will make use of the existence of
solutions in weighted Sobolev spaces for good domains.

We will work with weights in the Muckenhoupt class $A_p$ (see for
example \cite{Du,S2}). Recall that, for $1<p<\infty$, a
non-negative weight defined in $\R^n$ is in $A_p$ if
\begin{eqnarray*}
\sup_{B\subset\R^n}\left(\frac{1}{|B|}\int_B
\w\right)\left(\frac{1}{|B|}\int_B
\w^{-1/(p-1)}\right)^{p-1}<\infty,
\end{eqnarray*}
where the supremum is taken over all the balls $B\subset\R^n$ and
$|B|$ denotes the Lebesgue measure of $B$. It is known that, if
$\w\in A_p$, the spaces $W^{1,p}(U,\w)$ and $L^p(U,\w)$ are Banach
spaces (see \cite{GU}).

\begin{remark}
\label{integrable} If $U$ is a bounded domain and $\w\in A_p$
then, $L^p(U,\w)\subset L^1(U)$. Indeed, let $B$ a ball containing
$U$. We have,
\begin{eqnarray*}
\int_U|f|&=&\int_U|f|\w^{1/p}\w^{-1/p}\leq
\left(\int_U|f|^p\w\right)^{1/p}\left(\int_U
\w^{-p^\prime/p}\right)^{1/p^\prime}\\
&\leq&|B|^{(p-1)/p}\|f\|_{L^p(U,\w)}\left(\frac{1}{|B|}\int_B
\w^{-1/(p-1)}\right)^{(p-1)/p}.
\end{eqnarray*}
\end{remark}

In view of this remark the space $L_0^p(U,\w)$ is well defined. We
will work also with the space $W_0^{1,p}(U,\w)$ defined as the
closure of $C_0^\infty(U)$ in $W^{1,p}(U,\w)$.

Now we give the auxiliary result that we need. We state it as a
theorem since it can be of interest in itself. We outline a proof
based on Bogovskii's formula for solutions of the divergence
\cite{B,DM,G}. An alternative proof of this result was given in
\cite{DRS}.

\begin{theorem}
\label{divAp} Let $\w\in A_p$, $1<p<\infty$, and $U\subset\R^n$ be
a bounded domain which is star-shaped with respect to a ball
$B\subset U$. Given $f\in L_0^p(U,\w)$, there exists $\uu\in
W^{1,p}_0(U,\w)^n$ satisfying
\[
\mbox{{\rm \di\,}}\uu=f
\]
and
\begin{equation}
\label{cota con peso} \|\uu\|_{W^{1,p}(U,\omega)} \le
C\|f\|_{L^p(U,\omega)},
\end{equation}
with a constant $C$ depending only on $\w$, $U$, $p$ and $n$.
\end{theorem}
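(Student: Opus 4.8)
The plan is to construct $\uu$ explicitly via Bogovskii's integral formula and then to establish the weighted bound (\ref{cota con peso}) by recognizing the gradient of this formula as a Calder\'on--Zygmund singular integral operator, for which $A_p$ weighted norm inequalities are available. Fix once and for all a function $\theta\in C_0^\infty(B)$ with $\int_B\theta=1$, and for $f\in C_0^\infty(U)$ with $\int_U f=0$ define the $i$-th component of $\uu$ by
\[
u_i(x)=\int_U f(y)\,G_i(x,y)\,dy, \qquad
G_i(x,y)=\frac{x_i-y_i}{|x-y|^n}\int_{|x-y|}^{\infty}\theta\!\left(y+\xi\,\frac{x-y}{|x-y|}\right)\xi^{n-1}\,d\xi .
\]
Since $U$ is star-shaped with respect to $B$ and $\theta$ is supported in $B$, a standard computation shows that $\uu\in C_0^\infty(U)^n$ and that $\di\uu=f$ whenever $\int_U f=0$; this is the classical content of Bogovskii's construction (see \cite{B,DM,G}). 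It therefore suffices to prove the a priori estimate (\ref{cota con peso}) for such smooth mean-value-zero $f$, and then to pass to the limit.

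Next I would differentiate under the integral sign. The key structural fact, due to Bogovskii and carefully recorded by Galdi, is that each partial derivative decomposes as
\[
\frac{\partial u_i}{\partial x_j}(x)=\mathrm{p.v.}\int_U K_{ij}(x,x-y)\,f(y)\,dy + g_{ij}(x)\,f(x),
\]
where $g_{ij}\in L^\infty(U)$ and, for each fixed $x$, the kernel $z\mapsto K_{ij}(x,z)$ is homogeneous of degree $-n$, smooth on $\R^n\setminus\{0\}$, and has vanishing mean on $|z|=1$; moreover the dependence of $K_{ij}$ on the base point $x$ is controlled uniformly on $U$, since both $K_{ij}$ and $g_{ij}$ are built from $\theta$ and its derivatives. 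The contribution $g_{ij}f$ is trivially bounded in $L^p(U,\w)$ by $\|f\|_{L^p(U,\w)}$. For the principal-value term, the cancellation and homogeneity properties identify it as a variable-coefficient Calder\'on--Zygmund operator, and since $\w\in A_p$ the classical weighted norm inequality for such operators (see \cite{Du,S2}) yields
\[
\left\|\mathrm{p.v.}\int_U K_{ij}(x,x-y)\,f(y)\,dy\right\|_{L^p(U,\w)}\le C\,\|f\|_{L^p(U,\w)},
\]
with $C$ depending only on $\w$, $U$, $p$ and $n$. Combining the two contributions with the elementary bound $\|\uu\|_{L^p(U,\w)}\le C\|f\|_{L^p(U,\w)}$, obtained directly from the weakly singular kernel $G_i$, gives (\ref{cota con peso}).

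Finally I would remove the smoothness assumption on $f$ by density. Since $\w\in A_p$, smooth functions are dense in $L^p(U,\w)$; given $f\in L_0^p(U,\w)$ I choose $f_k\in C_0^\infty(U)$ with $f_k\to f$ in $L^p(U,\w)$ and replace $f_k$ by $f_k-\frac{1}{|U|}\int_U f_k$ so that each approximant has vanishing mean. This correction tends to zero because $L^p(U,\w)\subset L^1(U)$ by Remark \ref{integrable} and constants lie in $L^p(U,\w)$ as $\w\in L^1(U)$. The corresponding solutions $\uu_k\in C_0^\infty(U)^n\subset W_0^{1,p}(U,\w)^n$ form a Cauchy sequence by the estimate applied to $f_k-f_l$, and their limit $\uu$ satisfies $\di\uu=f$, the bound (\ref{cota con peso}), and $\uu\in W_0^{1,p}(U,\w)^n$.

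The main obstacle is the middle step: justifying the weighted $L^p$ bound for the singular part. In the unweighted case the Calder\'on--Zygmund theory applies directly, but here one must verify that the Bogovskii kernel, whose $x$-dependence is genuine and whose singular part is not a pure convolution, still falls within the scope of the $A_p$ weighted norm inequalities, with constants uniform in the base point $x\in U$. Tracking these constants through the explicit dependence on $\theta$ and on the geometry of the ball $B$ is where the real work lies.
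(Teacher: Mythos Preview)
Your approach is essentially that of the paper: both use Bogovskii's explicit formula, split each derivative into a bounded multiplier times $f$ plus a principal-value singular integral, invoke weighted Calder\'on--Zygmund theory for the latter, and close by density. The paper handles the zeroth-order bound $\|\uu\|_{L^p(U,\omega)}$ by dominating the weakly singular kernel via $|G(x,y)|\le C|x-y|^{1-n}$ and then by the Hardy--Littlewood maximal function, which is bounded on $L^p(\omega)$ for $\omega\in A_p$; you leave this as an ``elementary bound,'' which is fine but worth spelling out. The one place where the paper is more careful is precisely the obstacle you flag at the end: the authors remark that they could not locate in the literature an $A_p$-weighted bound for operators with variable kernels of the Calder\'on--Zygmund \cite{CZ} type (i.e., $K(x,x-y)$ with $K(x,\cdot)$ homogeneous of degree $-n$ and of vanishing spherical mean), so instead of invoking such a result they verify directly that the Bogovskii derivative kernel $\partial_{x_i}G_j(x,y)$ satisfies the size bound $|K|\le C|x-y|^{-n}$ and both H\"ormander regularity conditions in $x$ and in $y$ (citing \cite{N} for the latter), thereby placing the operator within the standard weighted framework of \cite[p.~221]{S2}. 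Your argument goes through once you either supply this verification or cite it; the rest matches the paper.
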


\begin{proof}
Using general results for singular integral operators we are going
to show that the explicit solution of $\di\uu=f$ introduced by
Bogovskii in \cite{B} (see also \cite{DM,G}) satisfies the desired
property. In what follows we consider $f$ defined in $\R^n$
extending it by zero to the complement of $U$.

Bogovskii's solution can be written as

\[
\uu(x)=\int_{U} \G(x,y)\,f(y)\,dy
\]
with $\G(x,y)=(G_1,\dots,G_n)$ given by
\[
\G(x,y)= \psi(y)\int_0^1
\frac{(x-y)}{s^{n+1}}\phi\left(y+\frac{x-y}{s}\right) \,ds,
\]
where $\varphi\in C_0^\infty(B)$ is such that $\int_B\varphi=1$
and $\psi\in C_0^{\infty}(\R^n)$ is a regularized characteristic
of $U$, i. e., $\psi(y)=1$ for any $y\in U$ and $\psi$ is
supported in a neighborhood of $U$.

In what follows the letter $C$ denotes a generic constant which
may depend on $n$, $p$, $\varphi$, $\w$, and the diameter of $U$,
that we will call $d$, but is independent of $f$ and $\uu$.

Let us first see that $\uu\in L^p(U,\w)^n$. It is known that (see
\cite{DM,G})

\begin{equation}
\label{DAp1} |\G(x,y)|\le \frac{C}{|x-y|^{n-1}}.
\end{equation}

Using (\ref{DAp1}) we have, for $x\in U$,
\begin{eqnarray*}
|\uu(x)|\,&\le&\,C\,\int_{\U} \frac{1}{\ \
|x-y|^{n-1}}\,|f(y)|\,dy\,\leq \,C\,\int_{B(x,d)}
\frac{1}{\ \ |x-y|^{n-1}}\,|f(y)|\,dy \\
&\leq&
\,C\,\sum_{k=0}^\infty\int_{\frac{d}{2^{k+1}}<|y-x|<\frac{d}{2^k}}
\frac{1}{\ \ |x-y|^{n-1}}\,|f(y)|\,dy\\
&\leq&
\,C\,\sum_{k=0}^\infty\int_{\frac{d}{2^{k+1}}<|y-x|<\frac{d}{2^k}}
\left(\frac{2^{k+1}}{d}\right)^{n-1}\,|f(y)|\,dy\\
&\le& \,C\,\sum_{k=0}^\infty
2^{-k}\,\frac{1}{|B(x,\frac{d}{2^k})|}\int_{B(x,\frac{d}{2^k})}
|f(y)|\,dy\,\le\,C\,Mf(x),
\end{eqnarray*}
where $Mf$ denotes the Hardy-Littlewood maximal function of $f$.
Since $\w\in A_p$, the maximal operator is bounded in
$L^p(\R^n,\w)$ (see for example \cite{Du,S2}), and therefore

\begin{equation}
\label{cota de u} \|\uu\|_{L^{p}(U,\w)^n}\le C\|f\|_{L^p(U,\w)}.
\end{equation}

Now, to see that the first derivatives of the components $u_j$ of
$\uu$ are also in $L^p(U,\w)$ we use that this derivatives can be
written in the following way (see \cite{DM,G}),

\[
\frac{\partial u_j}{\partial x_i} =\varphi_{ij}f + T_{ij}f,
\]
where $\varphi_{ij}$ is a function bounded by a constant depending
only on $\varphi$ and
\[
T_{ij}f(x)=\lim_{\ve\to 0}\int_{|y-x|>\ve} \frac{\partial
G_j}{\partial x_i}(x,y)f(y)\,dy.
\]
Therefore, to prove (\ref{cota con peso}) it only remains to prove
that the operators $T_{ij}$ are bounded in $L^p(\R^n,\w)$.

It was shown in \cite{DM,G} that $T_{ij}$ is continuous in
$L^p(\R^n)$ by using the Calder\'on-Zygmund singular integral
operator theory developed in \cite{CZ}.

We were not able to find in the literature that a general operator
of the form considered in \cite{CZ} is continuous in
$L^p(\R^n,\w)$ for $\w\in A_p$. However, such a continuity result
is known to hold for an operator of the form
\[
Tf(x)=\lim_{\ve\to 0}\int_{|y-x|>\ve} K(x,y)f(y)\,dy
\]
which is bounded in $L^2(\R^n)$ and with a kernel satisfying
\begin{equation}
\label{cota de K} |K(x,y)|\le \frac{C}{|x-y|^{n}},
\end{equation}
and the so called H\"ormander conditions, namely,
\[
|K(x,y)-K(x^\prime,y)| \le C\frac{|x-x^\prime|}{|x-y|^{n+1}}
\hspace*{1cm}{\rm if}\ |x-y|\ge 2|x-x^\prime|,
\]
and
\[
|K(x,y)-K(x,y^\prime)| \le C\frac{|y-y^\prime|}{|x-y|^{n+1}}
\hspace*{1cm}{\rm if}\ |x-y|\ge 2|y-y^\prime|
\]
see \cite[page 221]{S2}.

For $T_{ij}$ we have
\[
K(x,y) =\frac{\partial G_j}{\partial x_i}(x,y)
=\psi(y)\int_0^1\frac{\delta_{ij}}{s^{n+1}}\varphi\left(y+\frac{x-y}{s}\right)
+\frac{x_j-y_j}{s^{n+2}}\frac{\partial \varphi}{\partial x_i}
\left(y+\frac{x-y}{s}\right)\,ds,
\]
where $\delta_{ij}$ denotes the Kronecker symbol.

This kernel satisfies (\ref{cota de K}) (\cite{DM,G}) and also the
H\"ormander conditions (this was proved in \cite{N}).

In conclusion we obtain that, for any $i,j$,
\[
\left\|\frac{\partial u_j}{\partial x_i}\right\|_{L^p(U,\w)} \le C
\|f\|_{L^p(U,\w)}
\]
which together with (\ref{cota de u}) gives (\ref{cota con peso}).

To end the proof we have to show that $\uu$ vanishes at the
boundary, i.e., $\uu\in W^{1,p}_0(U,\w)^n$. For an arbitrary
weight this is not obvious from the definition of $\uu$. However,
once that we know the estimate (\ref{cota de u}) we can prove it
by density. We omit details because they are standard.

\end{proof}

\begin{remark}
If the weight $\w$ in the previous Theorem is a power of the
distance to the origin (which is one of the case of interest in
our applications to Stokes) it is not necessary to use the
H\"ormander conditions. Indeed, in this case (\ref{cota con peso})
can be proved using the results in (\cite{S1}).
\end{remark}

\section{Solutions of the divergence in cuspidal domains}
\label{sol de la div} \setcounter{equation}{0}

In this section we prove the existence of solutions of the
divergence in weighted Sobolev spaces for domains with an external
cusp.

We consider the following class of domains. Given integer numbers
$k\ge 1$ and $m\ge 0$ we define
\begin{eqnarray}
\label{ej} \O=\Big\{(x,y,z)\in I\times \R^k\times
I^m\,:\,|y|<x^\gamma\Big\}\subset\R^n,
\end{eqnarray}
where $n=m+k+1$, $I$ is the interval $(0,1)$ and $\gamma\ge 1$.

For $\gamma=1$, $\O$ is a convex domain while, for $\gamma>1$,
$\O$ has an external cusp. The set of singularities of the
boundary, which has dimension $m$, will be called $M$. Namely,
\begin{equation}
\label{def de M} M=\{{\bf 0}\}\times[0,1]^m\subset
\R^{k+1}\times\R^m.
\end{equation}

We will work with weighted Sobolev spaces where the weights are
powers of the distance to $M$ that will be called $d_M$.
Precisely, we will use the spaces $L^p(\O,d_M^{p\b})$ and
$W^{1,p}(\O,d_M^{p\b_1},d_M^{p\b_2})$ where $\b$, $\b_1$ and
$\b_2$ are real numbers. For $\b_1=\b_2=\b$ we will write
$W^{1,p}(\O,d_M^{p\b})$ instead of $W^1(\O,d_M^{p\b},d_M^{p\b})$.
It is well known that these spaces are Banach spaces (see
\cite{Ku}, Theorem 3.6. for details).

We consider $d_M$ defined everywhere in $\R^n$ and we are going to
use the following result that we state as a lemma for the sake of
clarity.

\begin{lemma}
\label{d esta en Ap} If $-(n-m)<\mu<(n-m)(p-1)$ then,
$d_M^{\mu}\in A_p$.
\end{lemma}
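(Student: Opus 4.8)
The plan is to verify directly the Muckenhoupt condition recalled above for $\w=d_M^\mu$. Writing $\mu'=-\mu/(p-1)$, so that $\w^{-1/(p-1)}=d_M^{\mu'}$, the hypothesis $-(n-m)<\mu<(n-m)(p-1)$ is exactly equivalent to the pair of inequalities $\mu>-(n-m)$ and $\mu'>-(n-m)$, and these are precisely what will make the relevant integrals converge. Since the codimension of $M$ is $n-m=k+1$, the whole argument will rest on a single scaling estimate for integrals of $d_M^a$ over balls centered on $M$, from which the $A_p$ condition follows by a standard near/far dichotomy.

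First I would record the key estimate: for every $p_0\in M$, every $R>0$ and every exponent $a>-(n-m)$,
\[
\int_{B(p_0,R)} d_M^a\,dx \le C\,R^{n+a},
\]
with $C$ independent of $p_0$ and $R$. When $a\ge 0$ this is immediate, since $d_M(x)\le|x-p_0|\le R$ on $B(p_0,R)$, so the integrand is bounded by $R^a$. When $a<0$ I would use that $M$ is contained in the $m$-plane $L=\{{\bf 0}\}\times\R^m$: writing a generic point of $\R^n$ as $(\xi,\eta)$ with $\xi\in\R^{k+1}$, $\eta\in\R^m$ and $p_0=({\bf 0},\eta_0)$, one has $d_M(\xi,\eta)\ge \mathrm{dist}((\xi,\eta),L)=|\xi|$, hence $d_M^a\le|\xi|^a$. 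Bounding $B(p_0,R)$ by the cylinder $\{|\xi|<R\}\times\{|\eta-\eta_0|<R\}$ and integrating first in $\eta$ and then in polar coordinates in $\R^{k+1}$ gives a factor $C_m R^m$ times $\int_{|\xi|<R}|\xi|^a\,d\xi\asymp R^{a+(k+1)}$, whose radial part $\int_0^R t^{a+k}\,dt$ converges precisely because $a>-(k+1)=-(n-m)$; the product is $\asymp R^{n+a}$. This elementary comparison $d_M\ge|\xi|$ is what lets the cusp set $M$ (a bounded cube, not an infinite plane) be treated uniformly in $R$ and $p_0$, and the singular range $a<0$ is where the hypothesis on $\mu$ is genuinely used; I expect this step to be the only real obstacle.

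Finally I would assemble the $A_p$ estimate. Fix a ball $B=B(x_0,r)$ and set $\delta=d_M(x_0)$. If $r\le\delta/2$, then since $d_M$ is $1$-Lipschitz one has $\delta/2\le d_M\le 3\delta/2$ on $B$, so the averages $\frac1{|B|}\int_B d_M^\mu$ and $\frac1{|B|}\int_B d_M^{\mu'}$ are comparable to $\delta^\mu$ and $\delta^{\mu'}$ respectively, and their $A_p$ product is comparable to $\delta^{\mu+\mu'(p-1)}=\delta^0=1$. If instead $r>\delta/2$, choose $p_0\in M$ with $|x_0-p_0|=\delta<2r$; then $B\subset B(p_0,3r)=:B'$ and $|B'|\le 3^n|B|$, so the key estimate with $R=3r$ yields $\frac1{|B|}\int_B d_M^\mu\le C R^\mu$ and $\frac1{|B|}\int_B d_M^{\mu'}\le C R^{\mu'}$. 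Multiplying, the $A_p$ product is bounded by $C\,R^{\mu+\mu'(p-1)}=C$, again because $\mu'(p-1)=-\mu$. Taking the supremum over all balls $B$ shows $d_M^\mu\in A_p$.
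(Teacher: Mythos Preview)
Your proof is correct, but it takes a different route from the paper. The paper does not verify the $A_p$ condition directly; it simply invokes a general result established elsewhere (Lemma~3.3 of \cite{DLg}), which states that $d_F^\mu\in A_p$ for the stated range of $\mu$ whenever the compact set $F$ is $m$-Ahlfors regular, i.e.\ the $m$-dimensional Hausdorff measure of $B(x,r)\cap F$ is comparable to $r^m$ for $x\in F$ and $r<\mathrm{diam}(F)$. Since $M=\{{\bf 0}\}\times[0,1]^m$ obviously satisfies this, the lemma follows in one line.

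What you do instead is give a self-contained, elementary verification: you exploit that $M$ sits inside the $m$-plane $L=\{{\bf 0}\}\times\R^m$ to get the pointwise lower bound $d_M\ge|\xi|$, which makes the singular integral $\int_{B(p_0,R)}d_M^a$ computable by Fubini and polar coordinates in $\R^{k+1}$, and then run the standard near/far dichotomy on balls. Your argument is specific to this flat $M$ (it would not cover, say, a curved $m$-dimensional set without modification), whereas the cited lemma handles any Ahlfors-regular set at once. On the other hand, your proof is completely explicit and does not rely on an external reference; it also makes transparent exactly where each endpoint of the interval $(-(n-m),(n-m)(p-1))$ is used.
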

\begin{proof} It follows from the more general result proved
in Lemma 3.3 of \cite{DLg}. Indeed, calling $d_F$ the distance to
a compact set $F\subset\R^n$, it was proved in that paper that
$d_F^\mu\in A_p$ whenever the $m$-dimensional Hausdorff measure of
$B(x,r)\cap F$ is equivalent to $r^m$, for all $x\in F$ and
$r<diam(F)$.
\end{proof}

In what follows we will use several times that, for
$(x,y,z)\in\O$, $d_M(x,y,z)\simeq x$, where the symbol $\simeq$
denotes equivalence up to multiplicative constants. Indeed, it is
easy to see that $x\le d_M(x,y,z)=|(x,y)|\le (\sqrt{2})x$.

In the proof of the main result of this section we will use the
Hardy type inequality given in the next lemma.

\begin{lemma}
\label{Hardy} Let $\O$ be the domain defined in (\ref{ej}) and
$1<p<\infty$. Given $\kappa\in\R$, if $v\in
W_0^{1,p}(\O,d_M^{p\kappa})$ then, $v/x\in L^p(\O,d_M^{p\kappa})$
and there exists constant $C$, depending only on $p$ and $\kappa$,
such that
\begin{equation}
\label{hardy2} \left\|\frac{v}{x}\right\|_{L^p(\O,d_M^{p\kappa})}
\le C \left\|\frac{\partial v}{\partial
x}\right\|_{L^p(\O,d_M^{p\kappa})}.
\end{equation}
Consequently, $W_0^{1,p}(\O,d_M^{p\kappa})$ is continuously
imbedded in $W_0^{1,p}(\O,d_M^{p(\kappa-1)},d_M^{p\kappa})$.
\end{lemma}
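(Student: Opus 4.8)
The plan is to prove the weighted Hardy inequality $(\ref{hardy2})$ for functions $v \in C_0^\infty(\O)$ first, and then extend to all of $W_0^{1,p}(\O,d_M^{p\kappa})$ by density (which is legitimate since $W_0^{1,p}$ is by definition the closure of $C_0^\infty(\O)$). The key observation, already recorded in the excerpt, is that $d_M(x,y,z)\simeq x$ on $\O$, so up to multiplicative constants the weight $d_M^{p\kappa}$ may be replaced by $x^{p\kappa}$ in both sides of the inequality. This reduces the problem to a one-dimensional Hardy inequality in the variable $x$ alone, with the transverse variables $(y,z)$ playing the role of parameters.

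For a fixed smooth compactly supported $v$, the natural approach is to integrate in $x$ first. Since $v(\cdot,y,z)$ is smooth and vanishes for $x$ near $0$ (the cusp) because the support is a compact subset of the open domain $\O$, I can write $v(x,y,z)=\int_0^x \frac{\partial v}{\partial x}(t,y,z)\,dt$ and apply the classical one-dimensional weighted Hardy inequality
\begin{equation*}
\int_0^\infty \left|\frac{v(x)}{x}\right|^p x^{p\kappa}\,dx \le C \int_0^\infty \left|\frac{\partial v}{\partial x}(x)\right|^p x^{p\kappa}\,dx,
\end{equation*}
whose validity is controlled by the standard Muckenhoupt-type condition on the exponent. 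The relevant condition for the weight $x^{p\kappa}$ is precisely $p\kappa+p>1$, i.e. $\kappa>\frac1p-1$, but one must be careful: the true statement must hold for every real $\kappa$, which suggests that the correct one-dimensional inequality to invoke is the version valid on the bounded interval $(0,1)$ with the appropriate boundary behavior, where the integration-from-zero form works in one regime and integration-from-$1$ in the other. After establishing the pointwise-in-$(y,z)$ estimate, I integrate over the transverse slice and use Fubini to assemble the full $n$-dimensional bound.

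The final sentence of the statement, the continuous imbedding into $W_0^{1,p}(\O,d_M^{p(\kappa-1)},d_M^{p\kappa})$, should then follow almost immediately: the gradient term is unchanged (both spaces weight $\partial v/\partial x_i$ by $d_M^{p\kappa}$), while for the zeroth-order term I must control $\|v\|_{L^p(\O,d_M^{p(\kappa-1)})}$, that is $\int_\O |v|^p d_M^{p(\kappa-1)}$. Using $d_M^{p(\kappa-1)}\simeq x^{p\kappa}/x^p = (x^{\kappa}/x)^p$ together with $(\ref{hardy2})$ gives exactly the bound $\|v/x\|_{L^p(\O,d_M^{p\kappa})}\le C\|\partial v/\partial x\|_{L^p(\O,d_M^{p\kappa})}$, which is dominated by $\|v\|_{W^{1,p}(\O,d_M^{p\kappa})}$.

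I expect the main obstacle to be the dependence of the one-dimensional Hardy constant on $\kappa$, and in particular handling the full range $\kappa\in\R$ uniformly rather than only the subcritical exponents. The honest technical point is to verify that, because $v$ is compactly supported away from the cusp $M$ (so $v$ vanishes near $x=0$), the integration-from-zero representation is always available and the Hardy inequality holds for all $\kappa$ without an exceptional value; the weight $x^{p\kappa}$ is locally integrable near the support and the compact-support hypothesis eliminates the boundary term that would otherwise obstruct certain exponents. Once the one-dimensional estimate with a $\kappa$-dependent but finite constant is in hand, the passage to the product domain and the density argument are routine.
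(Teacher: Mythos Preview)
Your proposal is correct and follows essentially the same route as the paper: reduce to $C_0^\infty(\O)$ by density, replace $d_M$ by $x$ via $d_M\simeq x$, prove a one-dimensional weighted Hardy inequality on the $x$-interval for each fixed $(y,z)$, and integrate over the transverse variables by Fubini. The only cosmetic difference is in the 1D step: the paper does a direct integration by parts (writing $x^{p\kappa-p}=\frac{1}{p\kappa-p+1}(x^{p\kappa-p+1})'$ and applying H\"older) to obtain the explicit constant $p/|p\kappa-p+1|$, whereas you invoke the classical Hardy inequality through the representation $v=\int_0^x\partial_x v$; both arguments share the same (undiscussed) exceptional exponent $\kappa=1-1/p$.
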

\begin{proof} Let $\phi\in C_0^\infty(a,1)$, where $0 < a <1$. Then,

\begin{eqnarray}
\label{hardy1} \left(\int_a^1
\left|\phi(x)\right|^p\,x^{p\kappa-p}\right)^{1/p}\leq\frac{p}{|p\kappa-p+1|}
\left(\int_a^1\left|\phi^\prime(x)\right|^p\,x^{p\kappa}\right)^{1/p}.
\end{eqnarray}
Indeed, integrating by parts and applying the H\"older inequality
we have

\begin{eqnarray*}
\int_a^1
\left|\phi(x)\right|^p\,x^{p\kappa-p}&=&\frac{1}{p\kappa-p+1}\int_a^1
\left|\phi(x)\right|^p\,\left(x^{p\kappa-p+1}\right)^\prime\\
&\le&\frac{p}{p\kappa-p+1}\int_a^1
\left|\phi(x)\right|^{p-1}\left|\phi^\prime(x)\right|\,x^{p\kappa-p+1}\\
&\leq&\frac{p}{|p\kappa-p+1|}\left(\int_a^1
\left|\phi(x)\right|^p\,x^{p\kappa-p}\right)^{(p-1)/p}
\left(\int_a^1\left|\phi^\prime(x)\right|^p\,x^{p\kappa}\right)^{1/p}
\end{eqnarray*}
and dividing by $\left(\int_a^1
\left|\phi(x)\right|^p\,x^{p\kappa-p}\right)^{(p-1)/p}$  we obtain
(\ref{hardy1}).

Now, by a density argument it is enough to prove (\ref{hardy2})
for $v\in C_0^\infty(\O)$. Using (\ref{hardy1}) we have

\begin{eqnarray*}
\label{coi} \int_{\o}\left| v(x,y,z)\right|^p x^{p\kappa-p}\ {\rm
d}x\,{\rm d}y\,{\rm d}z &=&
\int_{I^m}\int_{|y|<1}\int_{|y|^{1/\g}}^1\left|
v(x,y,z)\right|^p x^{p\kappa-p}\ {\rm d}x\,{\rm d}y\,{\rm d}z\\
&\leq& C\int_{I^m}\int_{|y|<1}\int_{|y|^{1/\g}}^1\left|
\frac{\partial v(x,y,z)}{\partial x}\right|^p x^{p\kappa}\ {\rm
d}x\,{\rm d}y\,{\rm d}z\\
&=& C\int_{\o}\left|\frac{\partial v(x,y,z)}{\partial
x}\right|^px^{p\kappa}\ {\rm d}x\,{\rm d}y\,{\rm d}z.
\end{eqnarray*}

To conclude the proof we use that $d_M(x,y,z)\simeq x$ and
therefore, that $W_0^{1,p}(\O,d_M^{p\kappa})$ is continuously
imbedded in $W_0^{1,p}(\O,d_M^{p(\kappa-1)},d_M^{p\kappa})$
follows from (\ref{hardy2}).
\end{proof}

We can now prove the main result of this section.

\begin{theorem}
\label{teo} Let $\O$ be the domain defined in (\ref{ej}) for a
fixed $\g>1$, $M$ defined as in (\ref{def de M}), and
$1<p<\infty$. If
$\beta\in\left(\frac{-\gamma(n-m)}{p}-\frac{\gamma-1}{p^\prime},
\frac{\gamma(n-m)}{p^\prime}-\frac{\gamma-1}{p^\prime}\right)$ and
$\eta\in\R$ is such that $\eta\ge\beta+\gamma-1$ then, given $f\in
L_0^p(\O,d_M^{p\b})$, there exists $\uu\in
W_0^{1,p}(\O,d_M^{p(\eta-1)},d_M^{p\eta})^n$ satisfying
\begin{equation}
\label{solucion en cuspide} \mbox{{\rm \di\,}}\uu=f
\end{equation}
and
\begin{equation}
\label{pesoalaizq}
\|\uu\|_{W^{1,p}(\O,d_M^{p(\eta-1)},d_M^{p\eta})^n} \le
C\|f\|_{L^p(\O,d_M^{p\b})}
\end{equation}
with a constant $C$ depending only on $\gamma$, $\b$, $\eta$, $p$
and $n$.
\end{theorem}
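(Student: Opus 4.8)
The plan is to reduce the full weighted estimate to a bound on the gradient alone, and then to obtain that bound through a dyadic decomposition of $\O$ adapted to the cusp, solving the divergence on each piece with Theorem \ref{divAp} and summing.

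First I would note that, since $d_M\simeq x$ on $\O$, it suffices to construct $\uu\in W_0^{1,p}(\O,d_M^{p\eta})^n$ with $\di\uu=f$ and
\[
\|\nabla\uu\|_{L^p(\O,d_M^{p\eta})}\le C\|f\|_{L^p(\O,d_M^{p\b})},
\]
the companion bound $\|\uu\|_{L^p(\O,d_M^{p\eta})}\le C\|f\|_{L^p(\O,d_M^{p\b})}$ being of the same (easier) type. Indeed, applying Lemma \ref{Hardy} with $\kappa=\eta$ to each component of $\uu$ gives $\|\uu\|_{L^p(\O,d_M^{p(\eta-1)})}\le C\|\partial\uu/\partial x\|_{L^p(\O,d_M^{p\eta})}$, because $d_M^{p\eta}/x^p\simeq d_M^{p(\eta-1)}$; together with the gradient bound this yields membership in $W_0^{1,p}(\O,d_M^{p(\eta-1)},d_M^{p\eta})^n$ and the estimate \eqref{pesoalaizq}. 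Thus Lemma \ref{Hardy} is exactly what converts a first-order estimate in the weight $d_M^{p\eta}$ into the zeroth-order weight $d_M^{p(\eta-1)}$ appearing in the statement.

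To bound the gradient I would decompose $\O$ dyadically in $x$. Writing $t=2^{-j}$, let $\O_j=\O\cap\{t/2<x<t\}$; inside each slab I split the cube $[0,1]^m$ in the $z$ variable into subcubes of side $\simeq t$, keeping the ball $\{|y|<x^\g\}$ whole. Each resulting piece $U$ is the image of a single fixed reference domain $\hat U=\{(\hat x,\hat y,\hat z):1/2<\hat x<1,\,|\hat y|<\hat x^\g,\,\hat z\in(0,1)^m\}$ under the anisotropic affine map $\Phi_t(\hat x,\hat y,\hat z)=(t\hat x,\,t^\g\hat y,\,t\hat z)$, and $\hat U$ is a fixed smooth bounded domain, star-shaped with respect to a ball. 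The choice of $z$-scale equal to $t$ rather than to the cusp width $t^\g$ is what produces the sharp relation $\eta\ge\b+\g-1$. I would then organize the pieces into a tree rooted at the non-cuspidal core $\{x\simeq 1\}$, joining each $U$ to a neighbour of comparable size closer to the root, and decompose $f=\sum_U f_U$ with $f_U$ supported in $U$, $\int_U f_U=0$, and $\sum_U\|f_U\|_{L^p(U,d_M^{p\b})}^p\le C\|f\|_{L^p(\O,d_M^{p\b})}^p$, where $f_U$ differs from $f\chi_U$ by the usual mean-value corrections that transport the accumulated mass of each subtree across the interfaces towards the root. On each $U$, Theorem \ref{divAp} provides $\uu_U$ with $\di\uu_U=f_U$; since $d_M\simeq t$ is essentially constant on $U$, pulling the problem back to $\hat U$ turns the weight into a constant and the constant of Theorem \ref{divAp} becomes uniform in $j$. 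Setting $\uu=\sum_U\uu_U$, where each $\uu_U$ is the Piola transform adapted to $\Phi_t$ of the solution on $\hat U$, gives $\di\uu=f$ with $\uu$ vanishing on $\partial\O$.

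Finally I would sum the local contributions. A scaling computation, using $dx=J\,d\hat x$ with $J=t^{1+\g k+m}$ (recall $n-m=k+1$) and the component scalings of the Piola transform, gives $\int_U|\partial u_i/\partial x_l|^p d_M^{p\eta}\le C\,t^{p(\eta-\b)}(a_i/a_l)^p\|f\|_{L^p(U,d_M^{p\b})}^p$, where $a_i,a_l\in\{t,t^\g\}$ are the scales of the corresponding variables; the largest ratio $a_i/a_l=t^{1-\g}$ occurs when an $x$- or $z$-component is differentiated in $y$, giving the exponent $p(\eta-\b+1-\g)$, which is non-negative precisely when $\eta\ge\b+\g-1$, and then summation over all pieces reconstitutes $\|f\|_{L^p(\O,d_M^{p\b})}^p$. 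The main obstacle is the control of the mean-value corrections $f_U$ in the weighted norm: bounding the accumulated mass $\int_{\{x<t\}}f$ by H\"older requires $\int_{\{x<t\}}d_M^{-p'\b}<\infty$, which holds exactly when $\b<\big(\g(n-m)-(\g-1)\big)/p'$, the upper endpoint, equivalent to $L^p(\O,d_M^{p\b})\subset L^1(\O)$ so that the mean value of $f$ is defined; while the one-dimensional Hardy inequality redistributing this mass converges only when the solution weight is integrable near the cusp, $\int_\O d_M^{p\eta}<\infty$ with the critical $\eta=\b+\g-1$, which forces the lower endpoint $\b>-\g(n-m)/p-(\g-1)/p'$. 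Verifying that these weighted sums converge, with constants independent of $j$ and of the number of pieces per scale, is the delicate part of the argument.
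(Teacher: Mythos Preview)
Your decomposition approach is a sound alternative, but it differs substantially from the paper's argument. The paper uses no dyadic slicing and no tree of mean-value corrections: it introduces a \emph{single} global change of variables $F(\hat x,\hat y,\hat z)=(\hat x^{\alpha},\hat y,\hat z)$, $\alpha=1/\gamma$, which maps the convex domain $\ho=\{|\hat y|<\hat x\}$ onto $\O$, transfers $f$ to $\hat g=\alpha\hat x^{\alpha-1}\hat f$, and checks that $\hat g\in L^p_0(\ho,d_M^{p\hat\beta})$ with $\hat\beta=\alpha(\beta+(\gamma-1)/p')$. The hypothesis on $\beta$ is exactly what makes $d_M^{p\hat\beta}\in A_p$ via Lemma~\ref{d esta en Ap}, so Theorem~\ref{divAp} applies \emph{once} on $\ho$ to produce $\hat{\mathbf v}$; the solution $\uu$ is then the Piola transform of $\hat{\mathbf v}$ under $F$, and a direct computation of the derivatives (using Lemma~\ref{Hardy} on $\ho$ to absorb the lower-order terms $\hat v_j/\hat x$ that arise when differentiating $u_j$ in $x$) gives the estimate in $W^{1,p}(\O,d_M^{p\eta})$, after which Lemma~\ref{Hardy} on $\O$ upgrades to the two-weight space, just as you do. What each route buys: the paper's proof is short and avoids entirely the summation of correction terms that you correctly flag as the delicate step; in its setting the interval for $\beta$ appears transparently as the $A_p$ range of the pulled-back weight, whereas in yours it emerges from two separate analytic constraints (integrability of $d_M^{-p'\beta}$ for the accumulated mass, integrability of $d_M^{p\eta}$ for the Hardy-type sum). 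Your method, in exchange, is more flexible---it does not rely on an explicit straightening map and would adapt to cusps not of exact power type---and is closer in spirit to the decomposition technique of \cite{DRS,ADM}.
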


\begin{proof}
It is enough to prove the result for the case $\eta=\b+\gamma-1$.
Therefore we are going to consider this case.

Define
\begin{equation}
\label{omega sombrero} \ho=\Big\{(\hx,\hy,\hz)\in I\times
\R^k\times I^m\,:\,|\hy|<\hx\Big\}\subset\R^n
\end{equation}
and let $F:\ho\rightarrow\O$ be the one-to-one application given
by
\[
F(\hx,\hy,\hz)=(\hx^\alpha,\hy,\hz)=(x,y,z),
\]
where $\alpha=1/\gamma$.

By this change of variables we associate functions defined in $\O$
with functions defined in $\ho$ in the following way,
\[
h(x,y,z)=\hat{h}(\hx,\hy,\hz).
\]
Now, for $f\in L^p_0(\O,d_M^{p\b})$, we define $\hg:\ho\to\O$ by
\[
\hat{g}(\hx,\hy,\hz):=\alpha \hx^{\alpha-1}\hat{f}(\hx,\hy,\hz).
\]
We want to apply Theorem \ref{divAp} for $\hat{g}$ on the convex
domain $\ho$ and then obtain the desired solution of
(\ref{solucion en cuspide}) by using the so called Piola transform
for vector fields.

In the rest of the proof we will use several times that, for
$(x,y,z)\in\O$, $d_M(x,y,z)\simeq x$, $\det DF(\hx,\hy,\hz)=\a
\hx^{\a-1}$ and $\det DF^{-1}(x,y,z)=\g x^{\g-1}$.

First let us see that, for
$\hat{\beta}=\alpha\left(\beta+(\gamma-1)/p^\prime\right)$, we
have
\begin{eqnarray}
\label{condiciondeg} \hat{g}\in
L^p_0(\ho,d_M^{p\hb})\hspace*{1cm}{\rm and}\hspace{1cm}
\|\hat{g}\|_{L^p(\ho,d_M^{p\hb})}\simeq\|f\|_{L^p(\O,d_M^{p\b})}.
\end{eqnarray}
Indeed, we have

\begin{eqnarray*}
\|\hat{g}\|_{L^p(\ho,d_M^{p\hb})}^p&\simeq
&\int_{\ho}|\hat{g}|^p\hx^{p\hb}=\alpha^p
\int_{\ho}|\hat{f}|^p\hx^{p(\a-1)}\hx^{\a p(\b+(\gamma-1)/p^\prime)}\\
&=&\alpha^p\int_{\o}|f|^px^{ p\b+1-\gamma}\gamma
x^{\gamma-1}\simeq\|f\|_{L^p(\O,d_M^{p\b})}^p
\end{eqnarray*}
and
\begin{eqnarray*}
\int_{\ho} \hat{g}=\alpha\int_{\ho} \hat{f}\hx^{\a-1}=
\alpha\int_{\o} fx^{1-\gamma}\gamma x^{\gamma-1}=\int_{\o} f=0.
\end{eqnarray*}
Thus, (\ref{condiciondeg}) holds.

Observe that, from Lemma (\ref{d esta en Ap}) and our hypothesis
on $\b$, we have $d_M^{p\hb}\in A_p$. In particular, it follows
from Remark \ref{integrable} that $\hat g\in L^1(\ho)$  and
therefore the mean value of $f$ in $\O$ is well defined.

Now, from Theorem \ref{divAp} we know that there exists
$\hat{\vv}\in W_0^{1,p}(\ho,d_M^{p\hb})^n$ such that
\begin{equation}
\label{divAp2} \di\hat{\vv}=\hat{g}
\end{equation}
and
\begin{equation}
\label{divAp3} \|\hat{\vv}\|_{W^{1,p}(\ho,d_M^{p\hb})^n} \le
C\|\hat{g}\|_{L^p(\ho,d_M^{p\hb})}.
\end{equation}
Now, we define $\uu$ as the Piola transform of $\hat\vv$, namely,
\[
\uu(x,y,z)=\frac{1}{\det DF}DF(\hx,\hy,\hz)\hat{\vv}(\hx,\hy,\hz)
\]
or equivalently, if $\hat\vv=(\hat v_1,\dots,\hat v_n)$,
\[
\uu(x,y,z)=\gamma x^{\gamma-1}\left(\alpha x^{1-\gamma} \hat
v_1(x^\gamma,y,z),\hat v_2(x^\gamma,y,z),\dots,\hat
v_n(x^\gamma,y,z)\right).
\]
Then, using (\ref{divAp2}), it is easy to see that
\[
\di\uu=f.
\]
To prove (\ref{pesoalaizq}) we first show that

\begin{equation}
\label{pesoalaizquierda2} \|\uu\|_{W^{1,p}(\O,d_M^{p\eta})^n} \le
C\|f\|_{L^p(\O,d_M^{p\b})}.
\end{equation}
In view of the equivalence of norms given in (\ref{condiciondeg})
and the estimate (\ref{divAp3}), to prove
(\ref{pesoalaizquierda2})  it is enough to see that
\begin{equation}
\label{campo} \|\uu\|_{W^{1,p}(\O,d_M^{p\eta})^n} \le
C\|\hat{\vv}\|_{W^{1,p}(\ho,d_M^{p\hat{\beta}})^n}.
\end{equation}
But, we have

\begin{equation}
\label{cotadeu1} \|u_1\|_{L^p(\O,d_M^{p\eta})}^p \simeq\int_\O
|u_1|^px^{p\eta} =\a\int_{\ho} |\hat v_1|^p\hx^{\a
p\eta}\hx^{\a-1} \simeq\|\hat v_1\|_{L^p(\O,d_M^{p\hb})}^p,
\end{equation}
where in the last step we have used $\a p\eta+\a-1=p\hb$. In an
analogous way we can show that, for $j=2,\dots,n$,
$$
\|u_j\|_{L^p(\O,d_M^{p\eta})} \le C \|\hat
v_j\|_{L^p(\O,d_M^{p\hb})}.
$$
Then, it only remains to bound the derivatives of the components
of $\uu$. That
\[
\left\|\frac{\partial u_1}{\partial
y_1}\right\|_{L^p(\O,d_M^{p\eta})}^p \simeq \left\|\frac{\partial
\hat{v}_1}{\partial\hy_1}\right\|_{L^p(\ho,d_M^{p\hb})}^p
\]
follows exactly as (\ref{cotadeu1}). Let us now estimate
$\frac{\partial u_2}{\partial x}$. Using

\[
\left|\frac{\partial u_2}{\partial x}\right|
=\gamma^2\left|\frac{\gamma-1}{\gamma}\frac{\hat{v}_2(x^\gamma,y,z)}
{x^\gamma}+\frac{\partial\hat{v}_2(x^\gamma,y,z)}{\partial
\hx}\right|x^{2(\gamma-1)}
\]
and Lemma \ref{Hardy} for $\ho$ we have

\begin{eqnarray*}
\left\|\frac{\partial u_2}{\partial
x}\right\|_{L^p(\O,d_M^{p\eta})}^p
&\simeq&\int_{\O}\left|\frac{\partial u_2}{\partial
x}\right|^px^{p\eta} \le
C\int_{\ho}\left(\left|\frac{\hat{v}_2}{\hx}\right|^p
+\left|\frac{\partial\hat{v}_2}{\partial\hx}\right|^p\right)
\hx^{2p(1-\alpha)}\hx^{\a p\eta+\a-1}\\
&\le& C\int_{\ho}\left(\left|\frac{\hat{v}_2}{\hx}\right|^p
+\left|\frac{\partial\hat{v}_2}{\partial
\hx}\right|^p\right)\hx^{p\hb} \le C\int_{\ho}\left|\frac{\partial
\hat{v}_2}{\partial\hx}\right|^p\hx^{p\hb}
=\left\|\frac{\partial\hat{v}_2}{\partial
\hx}\right\|_{L^p(\ho,d_M^{p\hb})}^p,
\end{eqnarray*}
where we have used again $\a p\eta+\a-1=p\hb$ and that
$2p(1-\alpha)>0$.

All the other derivatives of the components of $\uu$ can be
bounded in an analogous way and therefore (\ref{campo}) holds.

Now, since
$$
\uu|_{\partial\o}=\frac{1}{\det DF}DF\hat{\vv}|_{\partial\ho},
$$
it is easy to check that $\uu$ belongs to the closure of
$C_0^\infty(\O)^n$,i. e., $\uu\in W_0^{1,p}(\O,d_M^{p\eta})^n$ and
by Lemma \ref{Hardy} $\uu\in
W_0^{1,p}(\O,d_M^{p(\eta-1)},d_M^{p\eta})^n$ as we wanted to show.
\end{proof}

\begin{remark} The hypothesis that
$\b<\frac{\gamma(n-m)}{p^\prime}-\frac{\gamma-1}{p^\prime}$ is
necessary in order to have the condition $\int_\O f=0$ well
defined for $f\in L^p(\O,d_M^{p\b})$. Indeed, if
$\b\ge\frac{\gamma(n-m)}{p^\prime}-\frac{\gamma-1}{p^\prime}$, it
is easy to check that $f(x,y,z)=(1-\log x)^{-1}x^{\g-1-\g(n-m)}$
belongs to $L^p(\O,d_M^{p\b}) \setminus L^1(\O)$.
\end{remark}

\begin{remark} It can be shown that the condition $\eta\ge\beta+\gamma-1$
assumed in the theorem is also necessary. Indeed, if
$\eta-\beta<\gamma-1$, it can be shown by generalizations of the
example presented in Section \ref{section2}, that there exists
$f\in L_0^p(\O,d_M^{p\b})$ such that a solution $\uu$ of
(\ref{solucion en cuspide}) satisfying (\ref{pesoalaizq}) does not
exist (see \cite{ADLg} for the details).
\end{remark}

\section{Application to the Stokes equations}
\setcounter{equation}{0} \label{aplicacion1}

In this section we show how the results obtained in the previous
section can be applied to prove the well posedness of the Stokes
equations in appropriate weighted Sobolev spaces for cuspidal
domains.

Indeed, combining the variational analysis given in Sections
\ref{section2} with the results in Section \ref{sol de la div} we
obtain the following theorem.

\begin{theorem}
\label{teo5.1} Given $\g\ge 1$, let $\O$ be the domain defined in
(\ref{ej}). If $\ff\in H^{-1}(\O)^n$ then, there exists a unique
$(\uu,p)\in H^1_0(\O)^n \times L^2(\O,d_M^{2(\g-1)})$, with $p$
satisfying $\int_\O p\,d_M^{2(\g-1)}=0$, weak solution of the
Stokes equations (\ref{stokes1}). Moreover,
$$
\|\uu\|_{H^1_0(\O)^n} + \|p\|_{L^2(\O,d_M^{2(\g-1)})} \le C
\|\ff\|_{H^{-1}(\O)^n}
$$
with a constant $C$ depending only on $\g$ and $n$.
\end{theorem}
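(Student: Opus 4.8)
The plan is to read Theorem~\ref{teo5.1} as the composition of the two previous results: the abstract weighted saddle-point theorem (Theorem~\ref{existencia y unicidad}) reduces the well posedness of \eqref{stokes1} to a single weighted solvability statement for the divergence, and Theorem~\ref{teo} supplies exactly that statement. The whole argument hinges on choosing the weight $\w=d_M^{2(\g-1)}$. With this choice the pressure space $L_{\w,0}^2(\O,\w)$ produced by Theorem~\ref{existencia y unicidad} is precisely $\{p\in L^2(\O,d_M^{2(\g-1)}):\int_\O p\,d_M^{2(\g-1)}=0\}$, so it suffices to verify the hypothesis of Theorem~\ref{existencia y unicidad} for this $\w$ and then quote its conclusion verbatim.

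First I would dispose of the elementary conditions on $\w$. Since $\g\ge1$ and $d_M\simeq x$ on the bounded domain $\O$, the weight $\w=d_M^{2(\g-1)}$ is positive and lies in $L^1(\O)$. The substantive requirement is: for every $f\in L^2_0(\O,\w^{-1})=L^2_0(\O,d_M^{-2(\g-1)})$ there must exist $\uu\in H^1_0(\O)^n$ with $\di\uu=f$ and $\|\uu\|_{H^1(\O)^n}\le C_1\|f\|_{L^2(\O,d_M^{-2(\g-1)})}$. For $\g>1$ I would obtain this from Theorem~\ref{teo} with the parameters $p=2$, $\b=-(\g-1)$ and $\eta=0$. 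The choice of $\b$ is forced by $d_M^{p\b}=d_M^{-2(\g-1)}=\w^{-1}$, and the choice $\eta=0$ is the minimal admissible one because $\b+\g-1=0$. The case $\g=1$ is degenerate: then $\w\equiv1$, $\O$ is convex, hence a John domain, and the hypothesis reduces to the classical solvability of the divergence.

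The only genuine verification is that $\b=-(\g-1)$ falls strictly inside the admissible interval of Theorem~\ref{teo}. Putting $p=p^\prime=2$ and recalling $n-m=k+1\ge2$, the upper constraint $\b<\frac{\g(n-m)-(\g-1)}{2}$ reduces to $1-\g<\g(n-m)$, which holds because the left side is $\le0$ while the right side is $\ge2\g>0$; the lower constraint reduces to $\g(n-m)>\g-1$, which holds since $\g(n-m)\ge2\g>\g-1$. Granting admissibility, Theorem~\ref{teo} yields $\uu\in W^{1,2}_0(\O,d_M^{-2},1)^n$ solving $\di\uu=f$ together with the bound \eqref{pesoalaizq}, namely $\|\uu\|_{W^{1,2}(\O,d_M^{-2},1)^n}\le C\|f\|_{L^2(\O,d_M^{-2(\g-1)})}$. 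The derivative weight here is identically $1$, so this bound already controls $\|D\uu\|_{L^2(\O)}$; and since $d_M\le\sqrt2$ on $\O$ the function weight $d_M^{-2}$ is bounded below by $1/2$, whence $\int_\O|\uu|^2\le2\int_\O|\uu|^2 d_M^{-2}$. Thus $W^{1,2}_0(\O,d_M^{-2},1)\hookrightarrow H^1_0(\O)$ with $\|\uu\|_{H^1(\O)^n}\le\sqrt2\,\|\uu\|_{W^{1,2}(\O,d_M^{-2},1)^n}$, which delivers exactly the estimate $\|\uu\|_{H^1(\O)^n}\le C_1\|f\|_{L^2(\O,d_M^{-2(\g-1)})}$ demanded by Theorem~\ref{existencia y unicidad}.

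Finally I would invoke Theorem~\ref{existencia y unicidad} with $\w=d_M^{2(\g-1)}$, which returns a unique $(\uu,p)\in H^1_0(\O)^n\times L_{\w,0}^2(\O,\w)$ solving \eqref{stokes1} and satisfying $\|\uu\|_{H^1(\O)^n}+\|p\|_{L^2(\O,\w)}\le C_2\|\ff\|_{H^{-1}(\O)^n}$, which is the assertion of the theorem. I do not expect a serious obstacle here: the argument is a matching of weights under $p=2$, and the single analytic step beyond the two quoted theorems is the trivial embedding $W^{1,2}_0(\O,d_M^{-2},1)\hookrightarrow H^1_0(\O)$ coming from the weight $d_M^{-2}$ being bounded below on the bounded domain $\O$. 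The closest thing to a pitfall is the admissibility check for $\b$ and the minimality of $\eta=0$, so I would be careful to confirm both inequalities rely only on $n-m=k+1\ge2$ and $\g\ge1$.
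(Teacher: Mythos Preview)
Your proposal is correct and follows essentially the same approach as the paper: choose $p=2$, $\beta=1-\gamma$, $\eta=0$ in Theorem~\ref{teo}, verify that $\beta$ lies in the admissible interval using $n-m\ge 2$, and then apply Theorem~\ref{existencia y unicidad} with $\w=d_M^{2(\gamma-1)}$. You add two minor refinements the paper leaves implicit---the separate treatment of $\gamma=1$ (since Theorem~\ref{teo} is stated only for $\gamma>1$) and the explicit embedding $W^{1,2}_0(\O,d_M^{-2},1)\hookrightarrow H^1_0(\O)$ via $d_M\le\sqrt2$---but these are natural completions rather than a different route.
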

\begin{proof}
Consider the particular case $\eta=0$, $\b=1-\g$ and $p=2$ in
Theorem \ref{teo}. It is easy to check that in this case $\b$
satisfies the hypothesis of that theorem for any values of $n$ and
$m$ (recall that $m\le n-2$), i. e,
$$
\b=1-\g\in\left(\frac{-\g(n-m)}{2}-\frac{\g-1}{2},
\frac{\gamma(n-m)}{2}-\frac{\g-1}{2}\right).
$$
Then, given $f\in L_0^2(\O,d_M^{2(1-\g)})$ there exists $\uu\in
H_0^1(\O)^n$ satisfying
$$
\mbox{{\rm \di\,}}\uu=f
$$
and
$$
\|\uu\|_{H^1(\O)^n} \le C\|f\|_{L^2(\O,d_M^{2(1-\g)})}
$$
with a constant $C$ depending only on $\gamma$ and $n$. Therefore,
the result follows immediately from Theorem \ref{existencia y
unicidad}.
\end{proof}

In the next corollary we show the well posedness of the Stokes
equations in standard spaces.

\begin{corollary}
Given $\g\ge 1$, let $\O$ be the domain defined in (\ref{ej}) and
$\ff\in H^{-1}(\O)^n$. If $r_0$ is defined by
$$
r_0=2-\frac{4(\gamma-1)}{\gamma(k+2)-1}.
$$
Then, $r_0>0$, and for $0< r < r_0$, there exists a unique
$(\uu,p)\in H^1_0(\O)^n \times L^r(\O)$, with $p$ satisfying
$\int_\O p\,d_M^{2(\g-1)}=0$, weak solution of the Stokes
equations (\ref{stokes1}). Moreover, there exists a constant $C$
depending only on $n$, $\g$ and $r$ such that
$$
\|\uu\|_{H^1_0(\O)^n} +\|p\|_{L^r(\O)} \le C
\|\ff\|_{H^{-1}(\O)^n}.
$$
In particular, if $k\ge 2$, or $k=1$ and $\g<3$, $p\in L^1(\O)$.
\end{corollary}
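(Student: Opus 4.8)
The plan is to deduce the corollary directly from Theorem~\ref{teo5.1} by upgrading the integrability of the pressure with H\"older's inequality; no new solution is constructed. Theorem~\ref{teo5.1} already produces the unique weak solution $(\uu,p)\in H^1_0(\O)^n\times L^2(\O,d_M^{2(\g-1)})$ normalized by $\int_\O p\,d_M^{2(\g-1)}=0$, together with the estimate $\|\uu\|_{H^1_0(\O)^n}+\|p\|_{L^2(\O,d_M^{2(\g-1)})}\le C\|\ff\|_{H^{-1}(\O)^n}$. So the only genuinely new content is to show that this same $p$ lies in $L^r(\O)$ for $0<r<r_0$, with $\|p\|_{L^r(\O)}\le C\|p\|_{L^2(\O,d_M^{2(\g-1)})}$. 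First I would record that $r_0>0$: rewriting $r_0=\frac{2(\g k+1)}{\g(k+2)-1}$, the numerator is positive and the denominator satisfies $\g(k+2)-1\ge k+1>0$ for $\g\ge1$, $k\ge1$; the original formula also shows $r_0\le 2$, so every admissible $r$ obeys $r<2$.

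For the integrability upgrade I would split $|p|^r=\bigl(|p|^r d_M^{r(\g-1)}\bigr)\,d_M^{-r(\g-1)}$ and apply H\"older's inequality with the conjugate exponents $2/r$ and $2/(2-r)$, both legitimate since $0<r<2$. This yields
\[
\int_\O|p|^r\le\left(\int_\O|p|^2 d_M^{2(\g-1)}\right)^{r/2}\left(\int_\O d_M^{-\frac{2r(\g-1)}{2-r}}\right)^{(2-r)/2},
\]
where the first factor is exactly $\|p\|_{L^2(\O,d_M^{2(\g-1)})}^{\,r}$, already controlled by $\|\ff\|_{H^{-1}(\O)^n}$. Everything then reduces to the finiteness of the second factor, i.e.\ of $\int_\O d_M^{-\sigma}$ with $\sigma=\frac{2r(\g-1)}{2-r}$.

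To evaluate this I would use the equivalence $d_M(x,y,z)\simeq x$ on $\O$ recorded earlier together with the cusp geometry: integrating in $y$ over the ball $|y|<x^\g$ in $\R^k$ contributes a factor $\simeq x^{\g k}$, and the $z$-integration over $I^m$ a constant, so $\int_\O d_M^{-\sigma}\simeq\int_0^1 x^{\g k-\sigma}\,dx$, which is finite precisely when $\sigma<\g k+1$. The crux of the argument---though it is only elementary algebra, and the only place one can easily slip---is to verify that $\frac{2r(\g-1)}{2-r}<\g k+1$ is equivalent to $r<\frac{2(\g k+1)}{\g(k+2)-1}=r_0$; this is where the specific value of $r_0$ emerges, and I would be careful to clear denominators using $2-r>0$. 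Combining the two factors gives $\|p\|_{L^r(\O)}\le C\|p\|_{L^2(\O,d_M^{2(\g-1)})}\le C\|\ff\|_{H^{-1}(\O)^n}$ with $C=C(n,\g,r)$.

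Finally, uniqueness is inherited from Theorem~\ref{teo5.1}, whose solution is unique in $H^1_0(\O)^n\times L^2(\O,d_M^{2(\g-1)})$; the normalization $\int_\O p\,d_M^{2(\g-1)}=0$ is meaningful because the constructed $p$ lies in $L^2(\O,d_M^{2(\g-1)})$ and $d_M^{2(\g-1)}$ is integrable on the bounded domain $\O$, so $p\,d_M^{2(\g-1)}\in L^1(\O)$ by the Cauchy--Schwarz inequality. For the last assertion I would simply determine when $r_0>1$: the same computation gives $r_0>1\iff\g(k-2)>-3$, which holds for every $\g\ge1$ when $k\ge2$, and when $k=1$ exactly for $\g<3$; in those cases one may take $r=1$ (or any $r\in(1,r_0)$), whence $p\in L^1(\O)$.
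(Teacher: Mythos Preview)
Your proposal is correct and follows essentially the same route as the paper: invoke Theorem~\ref{teo5.1}, then upgrade the pressure from $L^2(\O,d_M^{2(\g-1)})$ to $L^r(\O)$ via H\"older's inequality with exponents $2/r$ and $2/(2-r)$, reducing matters to the finiteness of $\int_\O d_M^{-\sigma}$ with $\sigma=\frac{2r(\g-1)}{2-r}$, and checking that this holds precisely when $r<r_0$. Your write-up is in fact somewhat more explicit than the paper's (you spell out the equivalence $\int_\O d_M^{-\sigma}\simeq\int_0^1 x^{\g k-\sigma}\,dx$ and the algebra identifying $r_0$), but the argument is the same.
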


\begin{proof} Since $\g>1$ and $k\ge 1$ it follows that $r_0>0$.
Now, given a positive $r<r_0$ it is enough to see that, if
$(\uu,p)$ is the solution given by Theorem \ref{teo5.1}, then
$p\in L^r(\O)$ and
\begin{equation}
\label{cotapLr} \|p\|_{L^r(\O)}\le C
\|p\|_{L^2(\O,d_M^{2(\g-1)})}.
\end{equation}

It is easy to see that $\int_\O d_M^s < +\infty$ for any $s>-\g
k-1$. Then, applying the H\"older inequality with $2/r$ and its
dual exponent we have
$$
\|p\|^r_{L^r(\O)}=\int_\O |p|^r d_M^{(\g-1)r} d_M^{(1-\g)r} \le
\|p\|^r_{L^2(\O,d_M^{2(\g-1)})} \left(\int_\O
d_M^{\frac{2(1-\g)r}{2-r}}\right)^{\frac{2-r}2}.
$$
Since $r < r_0$, we have $(2(1-\gamma)r)/(2-r)>-\gamma k-1$, and
so the integral on the right hand side is finite. Therefore,
(\ref{cotapLr}) is proved. Finally, if $k\ge 2$, or $k=1$ and
$\g<3$, it is easy to check that $r_0>1$ and therefore $p\in
L^1(\O)$.
\end{proof}

To end this section let us show the results of the above theorem
and corollary in the particular cases $n=2$ and $n=3$. We will use
here the usual notation $x=(x_1,x_2)\in\R^2$ or
$x=(x_1,x_2,x_3)\in\R^3$.

For $n=2$ we have $m=0$ and, for $\g\ge 1$, the domain is
$$
\O=\Big\{x=(x_1,x_2)\in \R^2:\, 0<x_1<1 \, , \,
|x_2|<x_1^\g\Big\}.
$$
In this case $M=(0,0)$ and therefore $d_M(x)=|x|$. Then, for
$\ff\in H^{-1}(\O)^2$, there exists a unique
$$
(\uu,p)\in H^1_0(\O)^2 \times L^2(\O,|x|^{2(\g-1)}),
$$
with $p$ satisfying $\int_\O p\,|x|^{2(\g-1)}=0$, weak solution of
the Stokes equations . Moreover,
\begin{equation}
\label{cota en R2} \|\uu\|_{H^1_0(\O)^2} +
\|p\|_{L^2(\O,|x|^{2(\g-1)})} \le C \|\ff\|_{H^{-1}(\O)^2}
\end{equation}
and, for $r<2-\frac{4(\g-1)}{3\g-1}$,
\begin{equation}
\label{cota en R2 2} \|p\|_{L^r(\O)}\le C\|\ff\|_{H^{-1}(\O)^2}
\end{equation}
with a constant $C$ depending only on $\g$ and $r$.

For $n=3$ we have the two possible cases $m=0$ or $m=1$. In the
first case the domain has a cuspidal point and is given by

$$
\O=\Big\{x=(x_1,x_2,x_3)\in \R^3:\, 0<x_1<1 \, , \,
\sqrt{x^2_2+x^2_3}<x_1^\g\Big\}.
$$
In this case we obtain exactly the same estimates (\ref{cota en
R2}) and (\ref{cota en R2 2}) with obvious changes of dimension.
The only difference is that now $r<2-\frac{4(\g-1)}{4\g-1}$.
Observe that in particular, in this case $p\in L^1(\O)$.

Finally, when $m=1$, the domain has a cuspidal edge and is given
by
$$
\O=\Big\{x=(x_1,x_2,x_3)\in \R^3:\, 0<x_1<1 \, , \, 0<x_3<1 \,  ,
\, |x_2|<x_1^\g\Big\}
$$
and, defining $\bar x=(x_1,x_2)$, we have $d_M(x)=|\bar x|$ and
the a priori estimates
$$
\|\uu\|_{H^1_0(\O)^3} + \|p\|_{L^2(\O,|\bar x|^{2(\g-1)})} \le C
\|\ff\|_{H^{-1}(\O)^3}
$$
and, for $r<2-\frac{4(\g-1)}{3\g-1}$,
$$
\|p\|_{L^r(\O)}\le C\|\ff\|_{H^{-1}(\O)^3}.
$$

\section{Weighted Korn type inequalities}
\label{aplicacion2} \setcounter{equation}{0}

Important and well-known consequences of the existence of a right
inverse of the divergence operator in Sobolev spaces are the
different cases of Korn inequalities. It is also known that the
classic first and second cases (in the terminology introduced by
Korn) can be derived from the following inequality,

\begin{equation}
\label{des de Korn} \|D\vv\|_{L^p(\O)^{n\times n}} \le
C\left\{\|\vv\|_{L^p(\O)^n} +\|\ve(\vv)\|_{L^p(\O)^{n\times
n}}\right\},
\end{equation}
where we are using the usual notation for the symmetric part of
the differential matrix $D\vv$ of a vector field
$(v_1,\dots,v_n)$, namely,
$$
\ve_{ij}(\vv)=\frac{1}{2}\left(\frac{\partial v_i}{\partial x_j}
+\frac{\partial v_j}{\partial x_i}\right).
$$
For the cuspidal domains that we are considering this inequality
is not valid (counterexamples are given in \cite{ADLg,GG,W}). In
view of our results on solutions of the divergence it is natural
to look for Korn type inequalities in weighted Sobolev spaces. For
general H\"older $\alpha$ domains, inequalities of this kind were
obtained in \cite{ADL} using weights which are powers of the
distance to the boundary. Here we are interested in stronger
results for the particular class of H\"older $\alpha$ domains
defined in (\ref{ej}). We are going to prove estimates in norms
involving the distance to the cusp.

It is not straightforward to generalize the classic arguments to
derive Korn inequalities from the existence of right inverses of
the divergence to the weighted case. We do not know how to do it
if we work with weighted norms in both sides of the inequality
(\ref{des de Korn}). Therefore, we are going to prove a result for
a general weight and afterwards, we will obtain more general
inequalities for the case of weights which are powers of the
distance to the cusp, using an argument introduced in \cite{BK}.

Let us mention that in what follows we state and prove several
inequalities assuming that the left hand side is finite.
Afterwards, by density arguments, one can conclude that these
inequalities are valid whenever the right hand side is finite.
This is a usual procedure.

Given $1<p<\infty$, a domain $U\subset\R^n$, and a weight $\w$, we
denote with $W^{-1,p'}(U,\w^{1-p'})$ the dual space of
$W^{1,p}_0(U,\w)$. Observe that
$W^{-1,p}(U,\w)=W^{1,p'}_0(U,\w^{1-p'})$.

\begin{lemma}
\label{Lions} Given a weight $\w$, a bounded domain
$U\subset\R^n$, and $1<p<\infty$, assume that for any $g\in
L^{p'}_0(U)$  there exists $\uu\in W_0^{1,p'}(U,\w^{1-p'})^n$ such
that ${\rm\di}\uu=g$ and
$$
\|\uu\|_{W^{1,p'}(U,\w^{1-p'})^n}\le C \|g\|_{L^{p'}(U)},
$$
with a constant $C$ depending only on $U$, $p$, and $\w$. Fix an
open ball $B\subset U$. Then, for any $f\in L^p(U)$,
$$
\|f\|_{L^p(U)}\le\ C\left\{\|f\|_{W^{-1,p}(B)} +\|\nabla
f\|_{W^{-1,p}(U,\w)^n}\right\},
$$
where the constant $C$ depends only on $U$, $B$, $p$, and $\w$.
\end{lemma}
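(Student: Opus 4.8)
The goal is to establish the weighted inequality
$$
\|f\|_{L^p(U)}\le C\left\{\|f\|_{W^{-1,p}(B)}+\|\nabla f\|_{W^{-1,p}(U,\w)^n}\right\}
$$
from the hypothesis that the divergence admits a right inverse mapping $L^{p'}_0(U)$ into $W^{1,p'}_0(U,\w^{1-p'})^n$. This is the classic Nečas/Lions argument that recovers an $L^p$ bound on a function from the $W^{-1,p}$ norms of the function and its gradient, adapted here to a weighted setting on the gradient side. The natural route is by duality: to bound $\|f\|_{L^p(U)}$ I will test $f$ against an arbitrary $g\in L^{p'}(U)$ and estimate $\int_U fg$.

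The plan is as follows. First I would reduce to the case where $g$ has vanishing mean value. Given $g\in L^{p'}(U)$, write $g=g_0+\bar g$, where $\bar g=\frac{1}{|U|}\int_U g$ is the mean and $g_0=g-\bar g\in L^{p'}_0(U)$. The constant part contributes $\int_U f\,\bar g=\bar g\int_U f$, which should be controlled by $\|f\|_{W^{-1,p}(B)}$ after testing $f$ against a fixed bump function supported in $B$ (this is where the local term enters; one uses that the average of $f$ on $B$, or a smooth approximation of it, is dominated by $\|f\|_{W^{-1,p}(B)}$ times a constant depending on $B$). For the mean-zero part $g_0$, I would invoke the hypothesis to produce $\uu\in W^{1,p'}_0(U,\w^{1-p'})^n$ with $\di\uu=g_0$ and
$$
\|\uu\|_{W^{1,p'}(U,\w^{1-p'})^n}\le C\|g_0\|_{L^{p'}(U)}\le C\|g\|_{L^{p'}(U)}.
$$
Then, since $f$ pairs with $\di\uu$, integration by parts (in the sense of the duality $W^{-1,p}(U,\w)$ against $W^{1,p'}_0(U,\w^{1-p'})$) gives
$$
\int_U f\,g_0=\int_U f\,\di\uu=-\langle\nabla f,\uu\rangle\le\|\nabla f\|_{W^{-1,p}(U,\w)^n}\,\|\uu\|_{W^{1,p'}(U,\w^{1-p'})^n}.
$$
Combining the two pieces and taking the supremum over $g$ with $\|g\|_{L^{p'}(U)}\le 1$ yields the claimed bound, since $\|f\|_{L^p(U)}=\sup\{\int_U fg:\|g\|_{L^{p'}(U)}\le 1\}$.

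The step deserving the most care is the pairing $\int_U f\,\di\uu=-\langle\nabla f,\uu\rangle$, namely checking that the duality between $W^{-1,p}(U,\w)$ and $W^{1,p'}_0(U,\w^{1-p'})$ is exactly the one making this identity and the accompanying Hölder-type estimate valid. Here the paper's observation that $W^{-1,p}(U,\w)=W^{1,p'}_0(U,\w^{1-p'})'$ is precisely what is needed: $\uu$ lies in the predual space, so the gradient $\nabla f$, viewed componentwise in $W^{-1,p}(U,\w)$, pairs against $\uu$ with the correct weighted estimate. The main obstacle, then, is bookkeeping with the conjugate weights $\w$ and $\w^{1-p'}$ so that the two norms appearing in the product are exactly those in the statement; once the duality pairing is set up correctly this is a direct Hölder estimate and no delicate analysis is required.
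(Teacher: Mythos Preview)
Your overall strategy---duality via the right inverse of the divergence---matches the paper's, and your treatment of the mean-zero part $g_0$ is exactly right. However, there is a gap in the handling of the constant part. You claim that $\bar g\int_U f$ is controlled by $\|f\|_{W^{-1,p}(B)}$ by testing against a bump $\varphi$ supported in $B$, noting that the local average $\int_B f\varphi$ is dominated by $\|f\|_{W^{-1,p}(B)}$. But the quantity to bound is the \emph{global} integral $\int_U f$, not the local bump average $\int_B f\varphi$; these are different in general (take $f$ supported in $U\setminus B$), and $\|f\|_{W^{-1,p}(B)}$ alone cannot control $\int_U f$.

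The paper closes this gap as follows. It first runs exactly your argument on mean-zero test functions to obtain $\|f-\bar f\|_{L^p(U)}\le C\|\nabla f\|_{W^{-1,p}(U,\w)^n}$, where $\bar f$ is the mean of $f$ over $U$. Then it compares the two constants: setting $f_\varphi:=\int_B f\varphi$, one has $f-f_\varphi=(f-\bar f)+\int_B(\bar f-f)\varphi$, so $\|f-f_\varphi\|_{L^p(U)}\le(1+\|\varphi\|_{L^{p'}(B)})\|f-\bar f\|_{L^p(U)}$, which is again controlled by the gradient term. Finally $\|f_\varphi\|_{L^p(U)}\le |U|^{1/p}\big|\int_B f\varphi\big|\le C\|f\|_{W^{-1,p}(B)}$. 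The missing idea in your plan is precisely this comparison $|\bar f-f_\varphi|\le C\|f-\bar f\|_{L^p(U)}$, which transfers control from the global mean (not directly accessible through $B$) to the local bump average (which is). An equivalent one-line fix within your framework is to decompose $g=(g-c\varphi)+c\varphi$ with $c=\int_U g$ rather than $g=g_0+\bar g$: then $g-c\varphi$ has mean zero on $U$ and $\|g-c\varphi\|_{L^{p'}(U)}\le C\|g\|_{L^{p'}(U)}$, while $\int_U f\cdot c\varphi=c\int_B f\varphi$ is genuinely a pairing on $B$.
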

\begin{proof} Take $f\in L^p(U)$.
If  $\bar{f}$ denotes the mean value of $f$ over $U$ we have, for
$g\in L^{p'}(U)$,
$$
\int_\U (f-\bar f)g=\int_\U (f-\bar f)(g-\bar g).
$$
But, from our hypothesis,  there exists a solution $\uu\in
W_0^{1,p'}(U,\w^{1-p'})^n$ of $\di\uu =g-\bar g$ satisfying
$$
\|\uu\|_{W^{1,p'}(U,\w^{1-p'})^n}\le C \|g-\bar g\|_{L^{p'}(U)}.
$$
Thus,
\begin{eqnarray*}
\int_\U (f-\bar f)g=\int_U (f-\bar f)\di\uu
&\le& \|\nabla f\|_{W^{-1,p}(U,\w)^n}\|\uu\|_{W^{1,p'}(U,\w^{1-p'})^n}\\
&\le& C\|\nabla f\|_{W^{-1,p}(U,\w)^n}\|g-\bar g\|_{L^{p'}(U)}.
\end{eqnarray*}
Therefore,  by duality,
\begin{equation}
\label{f menos prom} \|f-\bar f\|_{L^p(U)}\le C\|\nabla
f\|_{W^{-1,p}(U,\w)^n} .
\end{equation}
Now, we decompose $f$ as
$$
f=\left(f-f_\varphi\right)+f_\varphi,
$$
where $f_\varphi:=\int_B f\varphi$ with $\varphi\in C_0^\infty(B)$
such that $\int_B\varphi=1$. Thus,
\[
f-f_\varphi=f-\bar{f} +\int_B\left(\bar{f}-f\right)\varphi,
\]
and so, using (\ref{f menos prom}),
\begin{eqnarray*}
\|f-f_\varphi\|_{L^p(U)}\le\left(1+\|\varphi\|_{L^{p'}(B)}\right)
\|f-\bar{f}\|_{L^p(U)} \le C\|\nabla f\|_{W^{-1,p}(U,\w)^n}.
\end{eqnarray*}
Therefore, to conclude the proof we have to estimate
$\|f_\varphi\|_{L^p(U)}$. But,
$$
\|f_\varphi\|_{L^p(U)}\leq |U|^{1/p}\left|\int_B f\varphi\right|
\le |U|^{1/p}\|f\|_{W^{-1,p}(B)}\|\varphi\|_{W^{1,p'}_0(B)}.
$$
\end{proof}

Using this lemma we can generalize a classic argument to prove a
Korn type inequality obtaining the following result.

\begin{theorem}
\label{Kornsimple} Given a weight $\w$, a bounded domain
$U\subset\R^n$, and $1<p<\infty$, assume that for any $g\in
L^{p'}_0(U)$  there exists $\uu\in W_0^{1,p'}(U,\w^{1-p'})^n$ such
that ${\rm\di}\uu=g$ and
$$
\|\uu\|_{W^{1,p'}(U,\w^{1-p'})^n}\le C \|g\|_{L^{p'}(U)},
$$
with a constant $C$ depending only on $U$, $p$, and $\w$. Fix an
open ball $B\subset U$. Then, for any $\vv\in W^{1,p}(U)^n$,
$$
\|D\vv\|_{L^p(U)^{n\times n}} \le C\left\{\|\vv\|_{L^p(B)^n}
+\|\ve(\vv)\|_{L^p(U,\w)^{n\times n}}\right\},
$$
where the constant $C$ depends only on $U$, $B$, $p$, and $\w$.
\end{theorem}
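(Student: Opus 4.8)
The plan is to reproduce the classical Korn argument, in which the existence of a right inverse of the divergence is used (through Lemma \ref{Lions}) to control each first derivative of $\vv$ by the symmetric part of its gradient. The starting point is the well-known algebraic identity for the second derivatives of a vector field $\vv=(v_1,\dots,v_n)$,
\[
\frac{\partial^2 v_i}{\partial x_j\partial x_k}
=\frac{\partial \ve_{ij}(\vv)}{\partial x_k}
+\frac{\partial \ve_{ik}(\vv)}{\partial x_j}
-\frac{\partial \ve_{jk}(\vv)}{\partial x_i},
\]
which holds in the distributional sense and expresses every second derivative of $\vv$ as a linear combination of first derivatives of the components of $\ve(\vv)$.

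Fix indices $i,j$ and set $f=\partial v_i/\partial x_j$. Assuming, as announced before the statement, that the left hand side is finite, we have $f\in L^p(U)$, so Lemma \ref{Lions} applies and gives
\[
\left\|\frac{\partial v_i}{\partial x_j}\right\|_{L^p(U)}
\le C\left\{\left\|\frac{\partial v_i}{\partial x_j}\right\|_{W^{-1,p}(B)}
+\left\|\nabla\frac{\partial v_i}{\partial x_j}\right\|_{W^{-1,p}(U,\w)^n}\right\}.
\]
It then remains to estimate the two terms on the right, after which I sum over $i,j$.

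For the first term I would use only the definition of the negative norm: testing $\partial v_i/\partial x_j$ against $\phi\in W^{1,p'}_0(B)$ and integrating by parts gives $-\int_B v_i\,\partial\phi/\partial x_j$, whence $\|\partial v_i/\partial x_j\|_{W^{-1,p}(B)}\le\|v_i\|_{L^p(B)}\le\|\vv\|_{L^p(B)^n}$. For the second term I would insert the identity above, so that each component of $\nabla(\partial v_i/\partial x_j)$ becomes a combination of derivatives $\partial \ve_{ab}(\vv)/\partial x_l$. The key weighted duality computation is that, for $w\in L^p(U,\w)$, testing $\partial w/\partial x_l$ against $\phi\in W^{1,p'}_0(U,\w^{1-p'})$ and using H\"older with the pair $\w^{1/p}$, $\w^{-1/p}$ yields
\[
\left\|\frac{\partial w}{\partial x_l}\right\|_{W^{-1,p}(U,\w)}
\le \|w\|_{L^p(U,\w)},
\]
where one uses that $\w^{-p'/p}=\w^{1-p'}$, so that $\partial\phi/\partial x_l$ is measured precisely in $L^{p'}(U,\w^{1-p'})$. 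Applying this to each $\ve_{ab}(\vv)$ bounds $\|\nabla(\partial v_i/\partial x_j)\|_{W^{-1,p}(U,\w)^n}$ by $C\|\ve(\vv)\|_{L^p(U,\w)^{n\times n}}$, which together with the first estimate and summation over $i,j$ gives the claim.

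The routine points (a density argument to justify the distributional identity and the use of Lemma \ref{Lions} for fields that are merely in $W^{1,p}(U)^n$) I would treat as standard. The main obstacle is the weighted bookkeeping in the second term: one must verify that exactly the weight $\w^{1-p'}$ appears on the test functions, so that the derivatives of $\ve(\vv)$ land in the space $W^{-1,p}(U,\w)$ required by Lemma \ref{Lions}. This is precisely where the divergence hypothesis, stated with the dual weight $\w^{1-p'}$, enters, and it is the reason the argument does not symmetrize to produce weighted norms on both sides, as remarked before the theorem.
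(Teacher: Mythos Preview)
Your argument is correct and matches the paper's proof essentially line for line: apply Lemma \ref{Lions} to $f=\partial v_i/\partial x_j$, bound the $W^{-1,p}(B)$ term by integrating by parts against $v_i$, and bound the $W^{-1,p}(U,\w)$ term by combining the second-derivative identity with the weighted duality estimate $\|\partial w/\partial x_l\|_{W^{-1,p}(U,\w)}\le\|w\|_{L^p(U,\w)}$. The paper records the two duality inequalities first and then invokes them, but the content is identical.
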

\begin{proof}
It is known that, for any $g\in L^p(B)$,
\begin{equation}
\label{cota de g1} \left\|\frac{\partial g}{\partial
x_j}\right\|_{W^{-1,p}(B)} \le \|g\|_{L^p(B)}.
\end{equation}
Analogously, for any $g\in L^p(U,\w)$, we have
\begin{equation}
\label{cota de g2} \left\|\frac{\partial g}{\partial
x_j}\right\|_{W^{-1,p}(U,\w)} =\sup_{0\neq\phi\in
W_0^{1,p'}(U,\w^{1-p'})} \frac{\left|\int_U
g\frac{\partial\phi}{\partial x_j}\right|}
{\|\phi\|_{W^{1,p'}(U,\w^{1-p'})}} \le \|g\|_{L^p(U,\w)}.
\end{equation}
On the other hand, applying Lemma \ref{Lions}, we have
$$
\left\|\frac{\partial v_i}{\partial x_j}\right\|_{L^p(U)} \le\
C\left\{\left\|\frac{\partial v_i}{\partial
x_j}\right\|_{W^{-1,p}(B)} +\left\|\nabla\frac{\partial
v_i}{\partial x_j}\right\|_{W^{-1,p}(U,\w)^n}\right\}.
$$
Using now the well known identity
$$
\frac{\partial^2 v_i}{\partial x_j\partial x_k}
=\frac{\partial\ve_{ik}(\vv)}{\partial x_j}
+\frac{\partial\ve_{ij}(\vv)}{\partial x_k}
-\frac{\partial\ve_{jk}(\vv)}{\partial x_i}
$$
in the last term on the right hand side, and the inequalities
(\ref{cota de g1}) and (\ref{cota de g2}), we conclude the proof.
\end{proof}

An immediate consequence of Theorems \ref{Kornsimple} and
\ref{teo} is the following.

\begin{corollary}
Given $\g\ge 1$, let $\O$ be the domain defined in (\ref{ej}), $M$
defined in (\ref{def de M}), $1<p<\infty$, and $B\subset\O$ an
open ball. Then, there exists a constant $C$, which depends only
on $\O$, $B$, and $p$, such that for all $\uu\in W^{1,p}(\O)$,
$$
\|D\uu\|_{L^p(\O)^{n\times n}} \le C\left\{\|\uu\|_{L^p(B)^n}
+\|\ve(\uu)\|_{L^p(\O,d_M^{p(1-\g)})^{n\times n}}\right\}.
$$
\end{corollary}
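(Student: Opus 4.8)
The plan is to obtain the inequality as a direct application of Theorem \ref{Kornsimple} with $U=\O$ and the weight $\w=d_M^{p(1-\g)}$, so that the term $\|\ve(\uu)\|_{L^p(\O,\w)}$ on the right-hand side becomes exactly $\|\ve(\uu)\|_{L^p(\O,d_M^{p(1-\g)})}$. The only thing left to verify is then the hypothesis of Theorem \ref{Kornsimple}, namely that for every $g\in L_0^{p'}(\O)$ there is a solution $\uu\in W_0^{1,p'}(\O,\w^{1-p'})^n$ of $\di\uu=g$ satisfying $\|\uu\|_{W^{1,p'}(\O,\w^{1-p'})^n}\le C\|g\|_{L^{p'}(\O)}$. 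This is precisely a weighted divergence estimate in the \emph{dual} exponent $p'$, and I would extract it from Theorem \ref{teo}.

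To set up the bookkeeping I would apply Theorem \ref{teo} with $p$ replaced by $p'$, with source exponent $\b=0$ (so that $L_0^{p'}(\O,d_M^{p'\cdot 0})=L_0^{p'}(\O)$) and with $\eta=\g-1$. First I must check that $\b=0$ lies in the admissible interval; since $(p')'=p$, that interval becomes $\left(\frac{-\g(n-m)}{p'}-\frac{\g-1}{p},\,\frac{\g(n-m)-(\g-1)}{p}\right)$, whose lower endpoint is negative and whose upper endpoint equals $\frac{\g(n-m-1)+1}{p}$, which is positive because the standing assumption $m\le n-2$ forces $n-m-1\ge 1$. Hence $0$ is strictly interior. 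The side condition $\eta\ge\b+\g-1$ holds with equality for $\eta=\g-1$. Theorem \ref{teo} then produces $\uu$ with $\di\uu=g$ lying in $W_0^{1,p'}(\O,d_M^{p'(\g-2)},d_M^{p'(\g-1)})^n$ and obeying $\|\uu\|_{W^{1,p'}(\O,d_M^{p'(\g-2)},d_M^{p'(\g-1)})^n}\le C\|g\|_{L^{p'}(\O)}$.

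It remains to reconcile this mixed-weight space with the single-weight space $W_0^{1,p'}(\O,\w^{1-p'})$ demanded by Theorem \ref{Kornsimple}. Using $1-p'=-1/(p-1)$ and $p'=p/(p-1)$, a short computation gives $\w^{1-p'}=d_M^{p(1-\g)(1-p')}=d_M^{p'(\g-1)}$, so the \emph{derivative} weight produced by Theorem \ref{teo} already matches $\w^{1-p'}$ exactly; this is exactly why the choice $\eta=\g-1$ is forced. For the zeroth-order part the two weights differ: Theorem \ref{teo} controls $\uu$ in $L^{p'}(\O,d_M^{p'(\g-2)})$, whereas I need it in $L^{p'}(\O,d_M^{p'(\g-1)})$. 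This is harmless because $d_M\simeq x$ with $x\in(0,1)$, so $d_M$ is bounded on $\O$ and $d_M^{p'(\g-1)}=d_M^{p'(\g-2)}\,d_M^{p'}\le C\,d_M^{p'(\g-2)}$; hence $\|\uu\|_{L^{p'}(\O,d_M^{p'(\g-1)})}\le C\|\uu\|_{L^{p'}(\O,d_M^{p'(\g-2)})}$, and the mixed-weight estimate upgrades to $\|\uu\|_{W^{1,p'}(\O,\w^{1-p'})^n}\le C\|g\|_{L^{p'}(\O)}$. The same domination of norms shows that convergence in the mixed-weight norm forces convergence in the single-weight norm, so the membership $\uu\in W_0^{1,p'}$ is preserved. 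With the hypothesis thus verified, Theorem \ref{Kornsimple} yields the asserted inequality.

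The substantive point of the argument is the dual-exponent bookkeeping: one applies the divergence result of Theorem \ref{teo} with $p'$ rather than $p$ and then tracks how $\w^{1-p'}$ relates to the $\eta$-weight, which is where $\eta=\g-1$ comes from; the rest is routine. I would also dispose separately of the borderline case $\g=1$, which is not covered by Theorem \ref{teo} but for which $\O$ is convex and $d_M^{p(1-\g)}\equiv 1$: there the required divergence estimate follows from Theorem \ref{divAp} with the trivial weight $\w\equiv 1\in A_{p'}$, and the inequality reduces to the classical Korn inequality.
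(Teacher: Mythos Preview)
Your proof is correct and follows exactly the paper's approach: apply Theorem~\ref{teo} in the dual exponent $p'$ with $\beta=0$, $\eta=\gamma-1$ to verify the hypothesis of Theorem~\ref{Kornsimple} for $\w=d_M^{p(1-\gamma)}$. You are in fact more careful than the paper on several points it leaves implicit---the check that $\beta=0$ lies in the admissible interval, the passage from the mixed-weight space $W_0^{1,p'}(\O,d_M^{p'(\gamma-2)},d_M^{p'(\gamma-1)})$ produced by Theorem~\ref{teo} to the single-weight space $W_0^{1,p'}(\O,d_M^{p'(\gamma-1)})$ via boundedness of $d_M$, and the separate treatment of the borderline $\gamma=1$ (which Theorem~\ref{teo} formally excludes).
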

\begin{proof} According to Theorem \ref{teo}, for any
$g\in L^{p'}_0(\O)$  there exists $\uu\in
W_0^{1,p'}(\O,d_M^{p'(\g-1)})^n$ such that ${\rm\di}\uu=g$ and
$$
\|\uu\|_{W^{1,p'}(\O,d_M^{p'(\g-1)})^n}\le C \|g\|_{L^{p'}(\O)},
$$
with a constant $C$ depending only on $\g$ and $p$. Therefore,
Theorem \ref{Kornsimple} applies for $\w=d_M^{p(1-\g)}$.
\end{proof}

We conclude the paper proving more general Korn type inequalities
for the cuspidal domains defined in (\ref{ej}). To obtain these
inequalities we use an argument introduced in \cite{BK}.

\begin{theorem}
\label{Korn} Given $\g\ge 1$, let $\O$ be the domain defined in
(\ref{ej}), $M$ defined in (\ref{def de M}), $1<p<\infty$,
$B\subset\O$ an open ball, and $\b\ge 0$. Then, there exists a
constant $C$, which depends only on $\O$, $B$, $p$, and $\b$, such
that for all $\uu\in W^{1,p}(\O,d_M^{p\b})$
$$
\|D\uu\|_{L^p(\O,d_M^{p\b})^{n\times n}} \le
C\left\{\|\uu\|_{L^p(B)^n}
+\|\ve(\uu)\|_{L^p(\O,d_M^{p(\b+1-\g)})^{n\times n}}\right\}.
$$
\end{theorem}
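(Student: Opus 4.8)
The strategy is to derive the statement from the case $\b=0$, which is exactly the content of the previous corollary (itself a consequence of Theorems \ref{Kornsimple} and \ref{teo}), by inserting the extra weight $d_M^{p\b}$ through a decomposition of $\O$ adapted to the distance to the cusp; this is the argument introduced in \cite{BK}. Notice that the two weights appearing here, $d_M^{p\b}$ on the left and $d_M^{p(\b+1-\g)}$ on the right, are obtained from those of the corollary, namely $d_M^{0}$ and $d_M^{p(1-\g)}$, by multiplying both of them by the same factor $d_M^{p\b}$. Since $\b\ge 0$, this factor is bounded on $\O$ and decays as one approaches $M$, so the whole problem is to reassemble local estimates while carrying it along.

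Concretely, I would cover the cuspidal part of $\O$ by the overlapping layers
\[
U_j=\O\cap\{2^{-j-1}<x<2^{-j+1}\},\qquad j\ge 0,
\]
which have uniformly bounded overlap and on each of which $d_M\simeq x\simeq 2^{-j}$; hence both weights are there comparable to the constants $2^{-jp\b}$ and $2^{-jp(\b+1-\g)}$. Each $U_j$ is the image of one fixed reference domain under the anisotropic dilation $(\hx,\hy,\hz)\mapsto(2^{-j}\hx,2^{-j\g}\hy,\hz)$, so that a Korn inequality in its second form can be transplanted to $U_j$: there is a rigid motion $\pi_j$ with
\[
\|D\uu-D\pi_j\|_{L^p(U_j)}\le C\,2^{j(\g-1)}\|\ve(\uu)\|_{L^p(U_j)},
\]
the factor $2^{j(\g-1)}$ reflecting the anisotropy of the dilation and matching the loss $\g-1$ already present in the corollary. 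Multiplying by the frozen weight $2^{-j\b}$, this growth is exactly absorbed, and the symmetric part is measured precisely in $L^p(U_j,d_M^{p(\b+1-\g)})$.

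It then remains to control the rigid motions $\pi_j$, in the weighted norm, by the single term $\|\uu\|_{L^p(B)^n}$. Since the fixed ball $B$ meets only finitely many layers, I would compare $\pi_j$ and $\pi_{j+1}$ on the overlap $U_j\cap U_{j+1}$: the difference of two rigid motions is determined by finitely many coefficients, so its norm over the overlap controls them, and each increment is in turn bounded by the local symmetric-gradient norms already estimated. Telescoping these increments from the layers that meet $B$ outwards expresses every $\pi_j$ in terms of $\|\uu\|_{L^p(B)}$ plus a sum of symmetric-gradient contributions; raising to the power $p$, weighting by $2^{-jp\b}$ and summing over $j$ with the bounded-overlap property then yields the assertion. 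The part of $\O$ away from $M$, where $d_M\simeq 1$, is contained in finitely many layers and is dealt with directly by the classical Korn inequality on a fixed Lipschitz subdomain.

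The main obstacle is precisely this last step: the bookkeeping of the rigid motions along the chain of overlapping layers and the verification that, once weighted by $d_M^{p\b}$, the geometric sums produced by the telescoping converge. This is exactly where the hypothesis $\b\ge 0$ is essential, since the decay $2^{-jp\b}$ it provides is what compensates the growth coming from the layerwise Korn constants; this convergence analysis is the technical heart of the \cite{BK} argument.
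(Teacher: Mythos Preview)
Your plan is a reasonable strategy, but it is \emph{not} the argument of \cite{BK}, and it is not what the paper does. The Buckley--Koskela trick used here is a \emph{dimension-lifting} device, not a Whitney decomposition with chaining of rigid motions. Concretely, the paper chooses $n'\in\NN_0$ and $0<s\le\gamma$ with $sn'=p\beta$, introduces the auxiliary cuspidal domain
\[
\O^{n',s}=\{(x,y,z')\in\R^{n+n'}:(x,y)\in\O,\ |z'|<x^{s}\},
\]
extends $\uu$ trivially to $\vv(x,y,z')=(\uu(x,y),0,\dots,0)$, and applies Theorem~\ref{Kornsimple} on $\O^{n',s}$ with the \emph{single} weight $\w=x^{p(1-\gamma)}$. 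Integrating over the dummy variables $z'$ produces the factor $x^{sn'}=x^{p\beta}\simeq d_M^{p\beta}$ in every integral, which is exactly the extra weight you want on both sides. The only thing to check is that the divergence hypothesis of Theorem~\ref{Kornsimple} holds on $\O^{n',s}$, and this follows by the same Piola-transform argument as Theorem~\ref{teo}.

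Your decomposition-and-chaining route can presumably be carried through, but it is considerably more laborious: you must track how the Korn constant scales under the anisotropic dilation, handle both the rotational and translational parts of the $\pi_j$, and prove convergence of the telescoped sums (which, as you note, is where $\beta\ge 0$ enters). The paper's approach bypasses all of this bookkeeping in one stroke; the price is that $p\beta$ must be realizable as $sn'$ with $s\le\gamma$, which is harmless here since $n'$ can be taken large. Your method, if completed, would likely be more flexible with respect to the class of weights, but for powers of $d_M$ the lifting trick is both shorter and cleaner.
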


\begin{proof} To simplify the notation we will assume that
$m=0$ in the definition of $\O$. The other cases can be treated
analogously.

Let $n^\prime\in\NN_0$ and $0<s\leq\gamma$ be such that
$sn^\prime=p\beta$. As in \cite{BK} we introduce
\begin{equation}
\label{ejgeneral}
\O^{n^\prime,s}=\{(x,y,\zp)\in\R^{n+n^\prime}\,:\,(x,y)\in\o,\,\zp\in\R^{n^\prime}\
{\rm with}\ |\zp|<x^s\}.
\end{equation}
Suppose that the hypothesis in Theorem \ref{Kornsimple} on
solutions of the divergence is verified for $U=\oj$ and
$\w=x^{p(1-\g)}$. Then, if $B'\subset\oj$ is a ball with the same
radius and center than $B$, we have
\begin{eqnarray}
\label{Buckleykoskela} \|D\vv\|_{L^p(\oj)^{(n+n')\times (n+n')}}
\le C\left\{\|\vv\|_{L^p(B')^{n+n'}}+
\|\ve(\vv)\|_{L^p(\oj,x^{p(1-\g)})^{(n+n')\times (n+n')}}\right\},
\end{eqnarray}
for all $\vv\in W^{1,p}(\oj)^{n+n^\prime}$.

Now, given $\uu$ in $W^{1,p}(\O,d_M^{p\b})^n$ we define
$$
\vv(x,y,\zp)=(\uu(x,y),\underbrace{0,\ldots,0}_{n^\prime}).
$$
Then, using that for $(x,y)\in\O$, $d_M(x,y)\simeq x$, it is easy
to check that (\ref{Buckleykoskela}) is equivalent to
$$
\|D\uu\|_{L^p(\O,d_M^{p\b})^{n\times n}} \le C
\left\{\|\uu\|_{L^p(B)^n}
+\|\ve(\uu)\|_{L^p(\O,d_M^{p(\b+1-\g)})^{n\times n}}\right\}.
$$
Hence, to finish the proof we have to verify the hypothesis of
Theorem \ref{Kornsimple} for the domain $\oj$ with the weight
$\w=x^{p(1-\g)}$. Since in this case $\w^{1-p'}=x^{p'(\g-1)}$, we
have to show that, for any $g\in L^{p'}_0(\oj)$, there exists
$\ww\in W_0^{1,p'}(\oj,x^{p'(\g-1)})^n$ such that ${\rm\di}\ww=g$
and
$$
\|\ww\|_{W^{1,p'}(\oj,x^{p'(\g-1)})^n} \le C \|g\|_{L^{p'}(\oj)}.
$$
But this can be proved exactly as Theorem \ref{teo}, using now the
convex domain
\begin{eqnarray*}
\ho^{n^\prime,s}:=\{(\hx,\hy,\hzp)\in\R^{n+n^\prime}\,:\,(\hx,\hy)\in\ho,\,\zp\in\R^{n^\prime}\
{\rm with}\ |\zp|<x^{\alpha s}\},
\end{eqnarray*}
with $\ho$ defined as in (\ref{omega sombrero}), and the
one-to-one map $F:\hoj\to \oj$ defined by
\[
F(\hx,\hy,\hzp):=(\hx^\alpha,\hy,\hzp).
\]
\end{proof}

\end{document}